\numberwithin{equation}{section}
\numberwithin{table}{section}
\theoremstyle{plain}
\newtheorem{theorem}{Theorem}[section]
\newtheorem*{thm:A}{Theorem \ref{thm:A}}
\newtheorem*{thm:B}{Theorem \ref{thm:B}}
\newtheorem{lemma}{Lemma}[section]
\newtheorem{corollary}{Corollary}[section]
\theoremstyle{remark}
\newtheorem*{remark}{Remark}
\newtheorem{example}{Example}[section]
\newcommand{\ZZ}{{\mathbb{Z}}}
\newcommand{\QQ}{{\mathbb{Q}}}
\newcommand{\allone}{{\mathbf{1}}}
\newcommand{\exdiv}{\parallel}
\newcommand{\restr}[1]{|_{#1}}
\DeclareMathOperator{\im}{Im}
\DeclareMathOperator{\diag}{diag}
\DeclareMathOperator{\coker}{coker}
\title[Critical groups of SRGs]{Critical group structure from the parameters of a strongly regular graph.}
\author[Ducey et al.]{Joshua E. Ducey, David L. Duncan, Wesley J. Engelbrecht, Jawahar V. Madan, Eric Piato, Christina S. Shatford, Angela Vichitbandha}
\address{Department of Mathematics and Statistics, James Madison University, Harrisonburg, VA 22807, USA}
\email{duceyje@jmu.edu}
\email{duncandl@jmu.edu}
\email{engelber@dukes.jmu.edu}
\address{Department of Mathematics, Harvey Mudd College, Claremont, CA 91711, USA}
\email{jmadan@g.hmc.edu}
\address{Department of Mathematics, SUNY College at Geneseo, Geneseo, NY 14454, USA}
\email{esp6@geneseo.edu}
\address{Mathematics and Statistics Department, Connecticut College, New London, CT 06320, USA}
\email{cshatfor@conncoll.edu}
\address{Department of Mathematics, University of Kentucky, Lexington, KY 40506, USA}
\email{aavichitbandha@uky.edu}
\keywords{invariant factors, elementary divisors, Smith normal form, critical group, Jacobian group, sandpile group, Laplacian matrix, strongly regular graph, 99-graph, Moore graph}
\subjclass[2010]{05C50}
\begin{document}
\begin{abstract}
We give simple arithmetic conditions that force the Sylow $p$-subgroup of the critical group of a strongly regular graph to take a specific form. These conditions depend only on the parameters $(v, k, \lambda, \mu)$ of the strongly regular graph under consideration. We give many examples, including how the theory can be used to compute the critical group of Conway's $99$-graph and to give an elementary argument that no $srg(28,9,0,4)$ exists.

\end{abstract}
\maketitle
\section{Introduction}

Given a finite, connected graph $\Gamma$, one can construct an interesting graph invariant $K(\Gamma)$ called the \textit{critical group}. This is a finite abelian group that captures non-trivial graph-theoretic information of $\Gamma$, such as the number of spanning trees of $\Gamma$; precise definitions are given in Section \ref{sec:prelims}. This group $K(\Gamma)$ goes by several other names in the literature (e.g., the \emph{Jacobian group} and the \emph{sandpile group}), reflecting its appearance in several different areas of mathematics and physics; see \cite{dino:finite} for a good introduction and \cite{glass} for a recent survey. Correspondingly, the critical group can be presented and studied by various methods. These methods include analysis of chip-firing games on the vertices of $\Gamma$ \cite{rotor}, framing the critical group in terms of the free group on the directed edges of $\Gamma$ subject to some natural relations \cite{deryagina}, computing (e.g., via unimodular row/column operators) the Smith normal form of a Laplacian matrix of the graph, and considering the underlying matroid of $\Gamma$ \cite{vince}.

Despite the variety of tools available, computing the critical group of an arbitrarily chosen graph can be computationally expensive. Instead, one often searches for families of graphs for which specific graph-theoretic knowledge can be used to streamline the computations involved. From this perspective, the \emph{strongly regular graphs} (\emph{srg}s) are a particularly interesting family. To paraphrase Peter Cameron, srgs lie on the boundary of the highly structured yet seemingly random. Computations have born witness to this, in that the critical groups of many subfamilies of srgs have been computed, while many more remain unknown. Examples of interesting subfamilies of srgs that have proven to be amenable to critical group computation include the Paley graphs \cite{sin:paley}, the $n \times n$ rook graphs \cite{rook}, Grassmann graphs on lines in projective space \cite{grassmann}, and Kneser graphs on $2$-element subsets \cite{kneser} (and the complements of all these). Some very recent progress deals with polar graphs \cite{raghu:polar} and the van Lint-Schrijver cyclotomic srgs \cite{raghu:cyclotomic}.

% In some sense they are also quite rare; up to complements there are only three non-trivial examples among all graphs with $12$ or fewer vertices.  We will give precise definitions in the next section.

To each srg, one can associate parameters $(v, k, \lambda, \mu)$ describing the number and valence of the vertices, as well as adjacency information. The families of srgs listed above are each such that these parameters vary over the family. An alternative approach for studying \emph{srg}s is to fix the parameters $(v, k, \lambda, \mu)$ and explore what can be deduced about an srg with these parameters. It is this technique that is taken here; see also \cite[Section 3]{dino:snf} and \cite[Section 10]{biggs} for similar approaches. More specifically, we show that the parameter set $(v, k, \lambda, \mu)$ determines arithmetic conditions that constrain the Sylow $p$-subgroup of $K(\Gamma)$ for any strongly regular graph $\Gamma$ having these parameters. 

The aforementioned Sylow $p$-subgroup constraints arise through an extension of the analysis in \cite{brouwer:prank} of the $p$-ranks of the Laplacian matrix $L$. The need for such an extension stems from the observation that, though knowing the critical group of $\Gamma$ gives you the $p$-rank of $L$ for any prime $p$, the converse need not hold. That is, the $p$-rank of $L$ may not uniquely determine the Sylow $p$-subgroup of $K(\Gamma)$.  The smallest counterexample is the $4 \times 4$ rook graph and the Shrikhande graph.  These are both strongly regular graphs with parameters $(16,6,2,2)$ and both of their Laplacian matrices have $2$-rank equal to $6$.  However the critical group of the rook graph is
\[
\left ( \ZZ/8\ZZ \right )^{5} \oplus \left ( \ZZ/32\ZZ \right )^{4}
\]
while the Shrikhande graph has critical group
\[
\ZZ/2\ZZ \oplus \left ( \ZZ/8\ZZ \right )^{2} \oplus \left ( \ZZ/16\ZZ \right )^{2} \oplus \left ( \ZZ/32\ZZ \right )^{4}.
\]
Nevertheless, the critical groups of these graphs can be distinguished by considering their Sylow 2-subgroups (it happens to be the case in these examples that the Sylow 2-subgroup equals the full critical group).

%As we demonstrate in Example \ref{ex:28}, our approach is well-suited to showing that there cannot exist \emph{srg}s with certain given parameter sets. However, the approach appears to be of limited use for distinguishing non-isomorphic \emph{srg}s with the same parameter set.

The approach we take here may be of limited use in distinguishing non-isomorphic srgs with the same parameter set.  However, as we demonstrate in Example \ref{ex:28}, our approach can be applied to show that there cannot exist srgs with certain parameter sets.

%%%%%%%%%
\section{Preliminaries} \label{sec:prelims}
\subsection{Strongly Regular Graphs}\label{sec:StronglyRegularGraphs}
Let $\Gamma = \Gamma(\mathcal{V},\mathcal{E})$ denote a connected, finite undirected graph, as in the introduction.  If every vertex in $\mathcal{V}$ is adjacent to $k$ other vertices, we say that $\Gamma$ is \emph{$k$-regular}.  Fix an ordering of the vertices.  Then the \textit{adjacency matrix} $A=(a_{i,j})$ of $\Gamma$ is defined
\[
a_{i,j} = \begin{cases}  1, & \mbox{ if vertex $i$ and vertex $j$ are adjacent}\\
0, & \mbox{ otherwise.}
\end{cases}
\]
Let $D$ denote the $|\mathcal{V}| \times |\mathcal{V}|$ diagonal matrix with $(i,i)$-entry equal to the degree of vertex $i$.  The \emph{Laplacian matrix} of $\Gamma$ is $L = D - A$. We use $I$ and $J$ to denote, respectively, the identity matrix and the all-ones matrix of the appropriate size. Note that when $\Gamma$ is $k$-regular, we have $L = kI - A$.

A graph $\Gamma$ is \emph{strongly regular with parameters} $(v, k, \lambda, \mu)$ if:
\begin{itemize}[leftmargin=1cm]
\item $\Gamma$ has $v$ vertices,
\item $\Gamma$ is $k$-regular,
\item any two adjacent vertices have exactly $\lambda$ common neighbors, and,
\item any two non-adjacent vertices have exactly $\mu$ common neighbors.
\end{itemize}
We sometimes abbreviate this by writing that $\Gamma$ is an $srg(v,k,\lambda,\mu)$. 

We now recall several formulas and standard facts about the Laplacian $L$ of an $srg(v, k, \lambda, \mu)$; for more details, see, e.g., \cite[Chapter 9]{brouwer:spectra}. The all-ones vector spans the kernel of $L$, so $0$ is an eigenvalue of $L$ with multiplicity one. Aside from this $0$ eigenvalue, $L$ has exactly two other distinct eigenvalues that we denote by $r$ and $s$. These can be computed directly from the parameters $(v, k , \lambda, \mu)$, and can be shown to satisfy the quadratic matrix equation
\begin{equation}\label{eqn:srgB}
(L - rI)(L-sI) = \mu J.
\end{equation}
Example \ref{ex:28} in the next section shows how Equation \ref{eqn:srgB} can be a powerful tool for probing a particular graph. We will write $f$ and $g$ for the multiplicities of $r$ and $s$, respectively.

Of great interest is the existence question for strongly regular graphs.  The Handbook of Combinatorial Designs \cite[Chapter 11]{handbook} has a large list of feasible parameter sets, along with adjacency spectra and known graph constructions.  An up to date version of this list, with more information, is available at Andries Brouwer's website \cite{brouwer:website}. On Brouwer's list, the graph parameters are color coded green for those for which examples exist, red for those for which it is known that no graph exists, and yellow if the question is not yet decided.   Excluded from these lists are the ``boring'' strongly regular graphs, which are the disjoint unions of complete graphs or the complements of these. The disjoint unions of multiple complete graphs are excluded for us as well, by our connectedness assumption.

\subsection{Critical Groups}  The Laplacian $L$ can be viewed as defining a homomorphism of free abelian groups $L \colon \ZZ^{\mathcal{V}} \to \ZZ^{\mathcal{V}}$. Since $L$ has a kernel of rank one, it follows that the cokernel has (free) rank one as well. In particular, we have a decomposition of the form
\[
\ZZ^{\mathcal{V}} / \im(L) \cong K(\Gamma) \oplus \ZZ,
\]
with $K(\Gamma)$ a finite abelian group called the \textit{critical group} of $\Gamma$.  (If $\Gamma$ were not connected there would be more copies of $\ZZ$.)  The order of $K(\Gamma)$ is the number of spanning trees of the graph.  Isomorphic graphs have isomorphic critical groups, so the critical group is a graph invariant.  

From the matrix-tree theorem \cite[Prop. 1.3.4]{brouwer:spectra}, we have that the order of the critical group is the product of the nonzero Laplacian eigenvalues, divided by the number of vertices.  In the case that $\Gamma$ is an $srg(v,k,\lambda,\mu)$, this becomes the identity
\[
|K(\Gamma)| = \frac{r^{f} \cdot s^{g}}{v}.
\]
Moreover, one can use Equation \ref{eqn:srgB} to show that the product $rs$ kills $K(\Gamma)$. (It is a remarkable fact, proved by Lorenzini \cite[Prop. 2.6]{dino:snf}, that the product of the distinct nonzero Laplacian eigenvalues kills the critical group of \textit{any} graph.)

Let $p$ be a prime and write $K_{p}(\Gamma)$ for the Sylow $p$-subgroup of $K(\Gamma)$. By the structure theorem for finitely generated abelian groups, to determine $K(\Gamma)$, it suffices to determine $K_p(\Gamma)$ for each $p$ dividing the order of $K(\Gamma)$. A popular approach for identifying $K_{p}(\Gamma)$ is to make use of the {Smith normal form} of $L$, which we review now: There is a unique integer diagonal matrix $S=\mathrm{diag}(s_1, \ldots, s_v)$ with (i) nonnegative diagonal entries $s_i$ satisfying $s_i \vert s_{i+1}$ for $1 \le i < v$, and (ii) so that there exist unimodular matrices $U, V$ satisfying
\begin{equation}\label{eqn:unimodular}
ULV = S.
\end{equation}
Then $S$ is the \emph{Smith normal form} of $L$ and the $s_i$ are the \emph{invariant factors}. The name is appropriate since the cokernel of $L$ has invariant factor decomposition
\begin{equation}\label{eqn:coker}
\coker(L) \cong \ZZ/s_{1}\ZZ \oplus \cdots \oplus \ZZ/s_{v}\ZZ .
\end{equation}
It follows from our connectedness assumption that $s_v = 0$, while $s_i \neq 0$ for all $1 \le i < v$; in particular, the critical group can be read off from (\ref{eqn:coker}) by taking the first $v-1$ terms.
%The Sylow $p$-subgroup of $\coker(C)$ can be found by localizing and computing Smith normal form over $\ZZ_{(p)}$ (an analogous structure theorem holds for finitely generated modules over PIDs).  

\begin{example}
Let $\Gamma$ denote the Petersen graph.  There is an ordering of the vertices so that the Laplacian matrix for $\Gamma$ is
\[
L = \begin{bmatrix}
3& -1&  &  & -1& -1&  &  &  &  \\
-1&  3& -1&  &  &  & -1&  &  &  \\
 & -1&  3& -1&  &  &  & -1&  &  \\
 &  & -1&  3& -1&  &  &  & -1&  \\
-1&  &  & -1&  3&  &  &  &  & -1\\
-1&  &  &  &  &  3&  & -1& -1&  \\
 & -1&  &  &  &  &  3&  & -1& -1\\
 &  & -1&  &  & -1&  &  3&  & -1\\
 &  &  & -1&  & -1& -1&  &  3&  \\
 &  &  &  & -1&  & -1& -1&  &  3
 \end{bmatrix}.
\]
This matrix has Smith normal form
\[
%\begin{bmatrix}
 %1&  &  &  &  &  &  &  &  &  \\
 %&  1&  &  &  &  &  &  &  &  \\
 %&  &  1&  &  &  &  &  &  &  \\
 %&  &  &  1&  &  &  &  &  &  \\
 %&  &  &  &  1&  &  &  &  &  \\
 %&  &  &  &  &  2&  &  &  &  \\
 %&  &  &  &  &  & 10&  &  &  \\
 %&  &  &  &  &  &  & 10&  &  \\
 %&  &  &  &  &  &  &  & 10&  \\
 %&  &  &  &  &  &  &  &  & 0 
 %\end{bmatrix}
 \diag(1,1,1,1,1,2,10,10,10,0)
 \]
 from which it follows that $K(\Gamma) \cong \ZZ/2\ZZ \oplus \left ( \ZZ/10\ZZ \right )^{3}$.  Equivalently, the critical group can be written relative to its elementary divisor decomposition as $K(\Gamma) \cong \left ( \ZZ/2\ZZ \right )^{4} \oplus \left ( \ZZ/5\ZZ \right )^{3}$, which is easily read off by looking at the invariant factors. The two summands appearing in this latter description are the Sylow 2- and 5-subgroups of $K(\Gamma)$, respectively. 
 
 %Working instead over $\ZZ_{(2)}$ or $\ZZ_{(5)}$ we get
 %\[
%\begin{bmatrix}
 %1&  0&  0&  0&  0&  0&  0&  0&  0&  0\\
 %0&  1&  0&  0&  0&  0&  0&  0&  0&  0\\
 %0&  0&  1&  0&  0&  0&  0&  0&  0&  0\\
 %0&  0&  0&  1&  0&  0&  0&  0&  0&  0\\
 %0&  0&  0&  0&  1&  0&  0&  0&  0&  0\\
 %0&  0&  0&  0&  0&  2&  0&  0&  0&  0\\
 %0&  0&  0&  0&  0&  0& 2&  0&  0&  0\\
 %0&  0&  0&  0&  0&  0&  0& 2&  0&  0\\
 %0&  0&  0&  0&  0&  0&  0&  0& 2&  0\\
 %0&  0&  0&  0&  0&  0&  0&  0&  0&  0
 %\end{bmatrix} \mbox{or}  \begin{bmatrix}
 %1&  0&  0&  0&  0&  0&  0&  0&  0&  0\\
 %0&  1&  0&  0&  0&  0&  0&  0&  0&  0\\
 %0&  0&  1&  0&  0&  0&  0&  0&  0&  0\\
 %0&  0&  0&  1&  0&  0&  0&  0&  0&  0\\
 %0&  0&  0&  0&  1&  0&  0&  0&  0&  0\\
 %0&  0&  0&  0&  0&  1&  0&  0&  0&  0\\
 %0&  0&  0&  0&  0&  0& 5&  0&  0&  0\\
 %0&  0&  0&  0&  0&  0&  0& 5&  0&  0\\
 %0&  0&  0&  0&  0&  0&  0&  0& 5&  0\\
 %0&  0&  0&  0&  0&  0&  0&  0&  0&  0
 %\end{bmatrix}
 %\]
 %respectively, revealing the $2$- and $5$-Sylow subgroups of $K(\Gamma)$.
\end{example}

We will repeatedly use the following notation:  For a fixed graph $\Gamma$ and prime $p$, we define $e_{i}$ to be the number of invariant factors of $L$ that are divisible by $p^{i}$ but not divisible by $p^{i+1}$. Notice that $e_{0}$ is the $p$-rank of $L$ (the rank when viewed as a matrix over the field of $p$ elements). For $i>0$, the integer $e_{i}$ is the multiplicity of $\ZZ/p^{i}\ZZ$ in the elementary divisor decomposition of the critical group. We will refer to the $e_i$ as the \emph{($p$-elementary divisor) multiplicities}, and note that they uniquely determine $K_p(\Gamma)$.

To compute these multiplicities we can use the following construction.  For fixed $p$ and $i \geq 0$, define
\[
M_{i} = \left \{ x \in \ZZ^{\mathcal{V}} \, \vert \, Lx \mbox{ is divisible by } p^{i} \right \}
\]
and
\[
N_{i} = \left \{ p^{-i} Lx  \, \vert \, x \in M_{i} \right \}.
\]
We use bar notation to denote entry-wise reduction modulo $p$ of vectors and matrices.  By considering the $\ZZ$-bases of $\ZZ^{\mathcal{V}}$ defined by the unimodular matrices $U,V$ in Equation \ref{eqn:unimodular} one sees that
\begin{eqnarray}
\dim_{p} \overline{M_{i}} = 1 + \sum_{j \geq i} e_{j} \label{eqn:Mi}\\
\dim_{p} \overline{N_{i}} = \sum_{0 \leq j \leq i} e_{j}.\label{eqn:Ni}
\end{eqnarray}
For a reference, see \cite[Prop. 13.8.2, 13.8.3]{brouwer:spectra}.

Our main tool is the following lemma, which relates the spectrum of $L$ to the critical group $K(\Gamma)$. Recall that $p^{i} \exdiv n$ means that $p^{i} \mid n$ and $p^{i+1} \nmid n$.

\begin{lemma}\label{lem:eigen}
Let $\Gamma$ be a connected graph, fix a prime $p$, and let $e_{i}$ be the multiplicity of $p^{i}$ as an elementary divisor of the Laplacian $L$. Let $\eta$ be an eigenvalue of $L$ with multiplicity $m$, and assume that $\eta$ is an integer. 

\begin{itemize}
\item If $p^{i} \mid \eta$, then $m \leq 1 + \sum_{j \geq i} e_{j}$.
\item If $p^{i} \exdiv \eta $, then $m \leq \sum_{0 \leq j \leq i} e_{j}$.
\end{itemize}
\end{lemma}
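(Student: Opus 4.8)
The plan is to deduce everything from the two dimension formulas \eqref{eqn:Mi} and \eqref{eqn:Ni}, by showing that an eigenvector for $\eta$ (together with the all-ones vector, in the first case) lands in $M_i$ or produces an element of $N_i$ after suitable scaling. Fix the prime $p$ and an integer eigenvalue $\eta$ of multiplicity $m$, and let $W \subseteq \ZZ^{\mathcal{V}}$ be a full-rank sublattice of the $\eta$-eigenspace, say spanned by $m$ linearly independent integer eigenvectors $w_1, \dots, w_m$ (these exist since $\eta$ is an integer, hence the eigenspace is defined over $\QQ$ and has an integer basis). I would further normalize each $w_t$ to be \emph{primitive}, i.e.\ not divisible by $p$; then the reductions $\overline{w_1}, \dots, \overline{w_m}$ need not be independent over $\Fp$, but I claim one can choose the $w_t$ so that they are. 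Indeed, pick $w_1, \dots, w_m$ to be a $\ZZ$-basis of the saturation $\widetilde{W} = (W \otimes \QQ) \cap \ZZ^{\mathcal{V}}$, which is a direct summand of $\ZZ^{\mathcal{V}}$; then $\overline{w_1}, \dots, \overline{w_m}$ are linearly independent in $\Fp^{\mathcal{V}}$, giving an $m$-dimensional subspace $\overline{\widetilde W} \subseteq \Fp^{\mathcal{V}}$.

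For the first bullet, suppose $p^i \mid \eta$. For any $x \in \widetilde W$ we have $Lx = \eta x$, which is divisible by $p^i$; hence $\widetilde W \subseteq M_i$. Moreover the all-ones vector $\allone$ satisfies $L\allone = 0$, which is divisible by $p^i$, so $\allone \in M_i$ as well. Now $\allone \notin \widetilde W$ (since $\eta \neq 0$ when $p^i \mid \eta$ and $i \geq 1$; and if $i = 0$ the inequality $m \le 1 + \sum_{j\ge 0} e_j$ is automatic because $\sum_j e_j = v-1 \ge m$, so we may assume $i \ge 1$ and hence $\eta \ne 0$), and more precisely $\overline{\allone} \notin \overline{\widetilde W}$: any eigenvector for the nonzero eigenvalue $\eta$ is orthogonal to $\allone$ over $\QQ$, but that does not immediately survive reduction, so instead I argue that $\widetilde W \oplus \ZZ\allone$ is still saturated (its quotient in $\ZZ^{\mathcal V}$ is torsion-free because $L$ restricted to it is injective with the right rank — or more simply, the $\QQ$-span of $\widetilde W$ and $\allone$ meets $\ZZ^{\mathcal V}$ in exactly $\widetilde W \oplus \ZZ\allone$ since the multiplicity of $\eta$ as a $\QQ$-eigenvalue is exactly $m$ and $\allone$ spans the kernel). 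Then $\overline{\widetilde W \oplus \ZZ \allone}$ has dimension $m+1$ and sits inside $\overline{M_i}$, so $m + 1 \le \dim_p \overline{M_i} = 1 + \sum_{j \ge i} e_j$, which is the claim.

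For the second bullet, suppose $p^i \exdiv \eta$, so $\eta = p^i u$ with $p \nmid u$. For $x \in \widetilde W$ we again have $Lx = \eta x = p^i (u x)$, so $x \in M_i$ and $p^{-i} L x = u x$. Thus $N_i \supseteq \{ u x : x \in \widetilde W \} = u \widetilde W$, and reducing mod $p$, $\overline{N_i} \supseteq \overline{u \widetilde W} = \bar u \cdot \overline{\widetilde W}$, which has dimension $m$ since $\bar u \neq 0$ in $\Fp$. Hence $m \le \dim_p \overline{N_i} = \sum_{0 \le j \le i} e_j$, as desired.

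The main obstacle is the saturation/primitivity bookkeeping: one must be careful that reducing a chosen integral basis of the rational eigenspace mod $p$ really does produce $m$ independent vectors, and in the first bullet that adjoining $\allone$ genuinely adds one more dimension mod $p$ rather than collapsing. The clean way to handle both is to work with saturated sublattices (direct summands of $\ZZ^{\mathcal V}$), for which reduction mod $p$ preserves dimension, and to observe that the $\QQ$-span of the $\eta$-eigenspace together with $\ker L$ intersects $\ZZ^{\mathcal V}$ in precisely the internal direct sum of the two saturated pieces — a consequence of the eigenspace decomposition of $L$ over $\QQ$ and the fact that $\eta \neq 0$. Everything else is a direct substitution into \eqref{eqn:Mi} and \eqref{eqn:Ni}.
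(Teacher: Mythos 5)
Your second bullet is correct and is essentially the paper's own argument: the saturated lattice $\widetilde{W} = V_{\eta} \cap \ZZ^{\mathcal{V}}$ reduces mod $p$ to an $m$-dimensional subspace, multiplying by the unit $u$ lands it inside $\overline{N_{i}}$, and \eqref{eqn:Ni} finishes. The first bullet, however, contains a genuine error. You adjoin $\allone$ and claim that $\widetilde{W} \oplus \ZZ\allone$ is saturated, so that its reduction has dimension $m+1$ inside $\overline{M_{i}}$; this would prove the strictly stronger inequality $m \leq \sum_{j \geq i} e_{j}$. That stronger inequality is false. For the Petersen graph with $p=2$, $\eta = 2$, $i=1$, the invariant factors are $1,1,1,1,1,2,10,10,10,0$, so $e_{1} = 4$ and $\sum_{j \geq 1} e_{j} = 4$, while $m = 5$; since $\dim_{p}\overline{M_{1}} = 1 + e_{1} = 5 = \dim_{p}\overline{\widetilde{W}}$, the vector $\overline{\allone}$ is forced to lie in $\overline{\widetilde{W}}$ there, and adjoining $\allone$ adds nothing mod $p$.

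The underlying flaw is the assertion that a direct sum of two saturated sublattices with complementary $\QQ$-spans is again saturated: it need not be (already in $\ZZ^{2}$ the sum $\ZZ(1,0) \oplus \ZZ(1,2)$ has index $2$, and the two generators collapse mod $2$). Neither of your justifications repairs this: $L$ is not injective on $\ZZ\allone$, and the statement that the $\QQ$-span meets $\ZZ^{\mathcal{V}}$ in exactly the internal direct sum is precisely the unproved (and in general false) claim. Fortunately the step is also unnecessary: $\widetilde{W} \subseteq M_{i}$ alone gives $m = \dim_{p}\overline{\widetilde{W}} \leq \dim_{p}\overline{M_{i}} = 1 + \sum_{j \geq i} e_{j}$, which is exactly the stated bound --- the $+1$ is already supplied by formula \eqref{eqn:Mi}, which accounts for the kernel vector $\allone$ internally. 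Deleting the adjoining-$\allone$ portion of your argument yields the paper's proof.
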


\begin{proof}
Let $V_{\eta}$ denote the $\eta$-eigenspace of $L$, when viewed as a matrix over the rational numbers $\QQ$.  The intersection $V_{\eta} \cap \ZZ^{\mathcal{V}}$ is a pure $\ZZ$-submodule of $\ZZ^{\mathcal{V}}$ of rank $m$, and so $\dim_{p} \overline{V_{\eta} \cap \ZZ^{\mathcal{V}}} = m$.  Since $p^i$ divides $\eta$, we have $V_{\eta} \cap \ZZ^{\mathcal{V}} \subseteq M_{i}$ and hence $\overline{V_{\eta} \cap \ZZ^{\mathcal{V}}} \subseteq \overline{M_{i}}$. It follows that
\[
m = \dim_{p} \overline{V_{\eta} \cap \ZZ^{\mathcal{V}}} \leq \dim_{p} \overline{M_{i}} = 1 + \sum_{j \geq i} e_{j}.
\]
For the second claim, write $\eta = x p^i$ for some integer $x$. Then $V_{\eta} \cap \ZZ^{\mathcal{V}} \subseteq M_{i}$ implies $x(V_{\eta} \cap \ZZ^{\mathcal{V}}) \subseteq N_{i}$, and so $\overline{x(V_{\eta} \cap \ZZ^{\mathcal{V}})} \subseteq \overline{N_{i}}$. The assumption that $p^{i} \exdiv \eta$ implies that $x$ is invertible mod $p$. Thus 
$$m = \dim_{p} \overline{V_{\eta} \cap \ZZ^{\mathcal{V}}} = \dim_p \overline{x(V_{\eta} \cap \ZZ^{\mathcal{V}})}     \leq \dim_p \overline{N_i}  = \sum_{0 \leq j \leq i} e_{j}.$$
\end{proof}

%%%%%%%%%

\section{Sylow $p$-subgroup structure} \label{sec:elem}
Throughout this section, $\Gamma$ denotes a connected $srg(v, k, \lambda, \mu)$ with Laplacian matrix $L$. As we have discussed, $L$ has two non-zero eigenvalues $r$ and $s$, and we denote by $f$ and $g$ their respective multiplicities. We assume that $r$ and $s$ are \emph{integers}, which is the case for any srg unless it is a conference graph \cite[Theorem 9.1.3]{brouwer:spectra}. We fix a prime $p$ dividing $|K(\Gamma)|$ and we write $K_{p}(\Gamma)$ for the Sylow $p$-subgroup of $K(\Gamma)$. Recall that $e_{i}$ denotes the multiplicity of $p^{i}$ as an elementary divisor of $L$; in particular, $e_{0}$ is the $p$-rank of $L$.

\begin{theorem}\label{thm:1}
Suppose $p \nmid r$, and let $a, \gamma$ be the (unique) nonnegative integers so that $p^{a} \exdiv s$ and $p^{\gamma} \exdiv v$.  Then
\[
K_{p}(\Gamma) \cong \ZZ/p^{a-\gamma}\ZZ \oplus \left ( \ZZ/p^{a}\ZZ \right )^{g-1}.
\]
The same statement holds if the roles of $r$ and $s$ are interchanged, and the roles of $f$ and $g$ are interchanged.
\end{theorem}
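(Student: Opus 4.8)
The plan is to determine all of the $p$-elementary divisor multiplicities $e_i$ of $L$, since $K_p(\Gamma) \cong \bigoplus_{i \ge 1}(\ZZ/p^i\ZZ)^{e_i}$; the idea is that two inequalities coming out of Lemma \ref{lem:eigen} are so nearly tight that a single counting identity forces the rest.

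First I would assemble the arithmetic input. Applying Equation \eqref{eqn:srgB} to the all-ones vector, which lies in $\ker L$, gives $rs = \mu v$; since $p \nmid r$ this yields $a = v_p(\mu) + \gamma$, so $a \ge \gamma \ge 0$ and the exponent $a-\gamma$ in the statement is meaningful. Because $rs$ kills $K(\Gamma)$ and $v_p(rs) = a$, we get $e_i = 0$ for $i > a$. The matrix-tree identity $|K(\Gamma)| = r^{f}s^{g}/v$ gives $\sum_{i \ge 1} i\,e_i = v_p\!\big(|K(\Gamma)|\big) = ga - \gamma$. Finally, taking $i = 0$ in Equation \eqref{eqn:Mi} (so that $M_0 = \ZZ^{\mathcal{V}}$) gives $\sum_{i \ge 0} e_i = v - 1 = f + g$, equivalently $\sum_{i=0}^{a} e_i = f+g$.

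Next the two inequalities. The second bullet of Lemma \ref{lem:eigen} applied to the eigenvalue $\eta = r$ with $i = 0$ (valid since $p^{0} \exdiv r$) gives $f \le e_0$. The first bullet applied to $\eta = s$ with $i = a$ gives $g \le 1 + \sum_{j \ge a} e_j = 1 + e_a$, hence $e_a \ge g - 1$. Subtracting these from $\sum_{i=0}^{a} e_i = f + g$ shows that the nonnegative integers $e_0 - f$, $e_1, \dots, e_{a-1}$, $e_a - (g-1)$ sum to exactly $1$, so exactly one of them equals $1$ and the rest vanish. In each of the three resulting cases I would compute $\sum_{i \ge 1} i\,e_i$ and match it against $ga-\gamma$: the case $e_0 = f+1$ forces $\gamma = a$; the case $e_m = 1$ with $1 \le m \le a-1$ forces $m = a-\gamma$; the case $e_a = g$ forces $\gamma = 0$. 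Since $0 \le \gamma \le a$, exactly one case is consistent, and in every case the only nonzero multiplicities among $e_1, e_2, \dots$ are one copy of $e_{a-\gamma}$ (trivial when $\gamma = a$) together with $e_a = g-1$, giving $K_p(\Gamma) \cong \ZZ/p^{a-\gamma}\ZZ \oplus (\ZZ/p^{a}\ZZ)^{g-1}$. The last sentence of the theorem is then immediate from the symmetry of the hypotheses under exchanging $(r,f)$ with $(s,g)$.

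I expect no deep obstacle: the mechanism is essentially pigeonhole once the four identities are in place. The point that needs care is applying Lemma \ref{lem:eigen} to the right eigenvalue and exponent — using the valuation-zero statement for $r$ but the divisibility statement for $s$, since swapping them would not give tight enough bounds — and verifying that the final three-case split is exhaustive, which comes down to the inequality $a \ge \gamma$ established at the outset.
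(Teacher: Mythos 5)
Your proposal is correct and follows essentially the same route as the paper: the two counting identities $\sum_{i=0}^{a} e_i = f+g$ and $\sum_i i e_i = ag-\gamma$, plus Lemma \ref{lem:eigen} applied to $r$ (valuation-zero form) and to $s$ (divisibility form), after which the multiplicities are pinned down by a short case analysis. The only differences are cosmetic — you package the cases as a single ``these nonnegative quantities sum to $1$'' pigeonhole and establish $a \ge \gamma$ up front via $rs = \mu v$, whereas the paper first sandwiches $e_a$ between $g-1$ and $g$ and lets the case analysis determine $\gamma$ — so no changes are needed.
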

\begin{proof}
We have assumed that $p$ divides $\vert K(\Gamma) \vert = r^f  s^g / v$, so the hypotheses imply that $a \geq 1$. Similarly, since $rs$ kills the critical group and $p^{a} \exdiv rs$ we have
\[
K_p(\Gamma) \cong \left ( \ZZ/p\ZZ \right )^{e_{1}} \oplus \left ( \ZZ/p^{2}\ZZ \right )^{e_{2}} \oplus \cdots \oplus \left ( \ZZ/p^{a}\ZZ \right )^{e_{a}}. 
\] 
From the Smith normal form of $L$, we see that $e_{0} + e_{1} + \cdots + e_{a} + 1 = v$ is the number of diagonal entries in the Smith normal form. Similarly, by diagonalizing $L$, we see $f + g + 1 = v$. This gives
\begin{equation}\label{eqn:kills}
e_{0} + e_{1} + \cdots + e_{a}  = f + g.
\end{equation}
The order of $K_{p}(\Gamma)$ we get from the matrix-tree theorem:
\[
|K_{p}(\Gamma)| = \frac{\left ( p^{a} \right )^{g}}{p^{\gamma}}.
\]
This order can be alternatively expressed in terms of the elementary divisor multiplicities, from which we obtain
\begin{equation}\label{eqn:kirchhoff}
e_{1} + 2e_{2} + \cdots + ae_{a} = ag - \gamma.
\end{equation}
Applying Lemma \ref{lem:eigen} to the $s$-eigenspace of $L$ we have
\[
g \leq e_{a} + 1.
\]
In fact, we always have
\begin{equation}\label{eqn:ea}
g-1 \leq e_{a} \leq g.
\end{equation}
For suppose that $e_{a} > g$.  Then $ae_{a} > ag \geq ag - \gamma \geq ae_{a}$,
where the last inequality follows from Equation \ref{eqn:kirchhoff}.  This is impossible therefore the bound \ref{eqn:ea} holds.  

Next we will see that the $p$-rank $e_{0}$ must equal $f$ or $f+1$. In the case that $e_{a} = g$, Equation \ref{eqn:kills} gives
\[e_{0} + \cdots + e_{a-1} = f
\]
 and so $e_{0} \leq f$.  By Lemma \ref{lem:eigen} applied to the $r$-eigenspace, we have $f \leq \dim \overline{N_{0}} = e_{0}$.  Thus $e_{0} = f$ and we see $e_{i} = 0$ for $i \neq 0, a$ by Equation \ref{eqn:kills}.  So in this case
\[
K_{p}(\Gamma) \cong \left ( \ZZ/p^{a}\ZZ \right )^{g},
\]
which agrees with the statement of the theorem since Equation \ref{eqn:kirchhoff} forces $\gamma$ to be zero.

Now consider the case $e_{a} = g-1$.  From Equation \ref{eqn:kills} we get 
\[
e_{0} + \cdots + e_{a-1} = f+1
\]
and so $e_{0} \leq f+1$.  As before we also have $f \leq e_{0}$.  It turns out that both $e_{0}=f$ and $e_{0}=f+1$ are possible (more about this in the next corollary).  In the case that $e_{0}=f+1$, we are forced to have $e_{i}=0$ for $i \neq 0,a$ and we get 
\[
K_{p}(\Gamma) \cong \left ( \ZZ/p^{a}\ZZ \right )^{g-1}
\]
which agrees with the statement of the theorem, since now Equation \ref{eqn:kirchhoff} forces $\gamma = a$.

Finally, if $e_{a} = g-1$ and $e_{0} = f$, we see that Equation \ref{eqn:kills} becomes
\[
e_{1} + \cdots + e_{a-1} = 1.
\]
This means that there is some $i \neq 0, a$ with $e_i = 1$ and $e_j = 0$ for $j \neq 0, i, a$. We can identify the distinguished subscript $i$ by looking carefully at Equation \ref{eqn:kirchhoff}:
\begin{align*}
e_{1} + 2e_{2} + \cdots + (a-1)e_{a-1} &= ag - \gamma - ae_{a}\\
&= ag - \gamma - a(g-1)\\
&= a - \gamma.
\end{align*}
Thus we see that $i = a - \gamma$.  We have shown in this case that
\[
K_{p}(\Gamma) \cong \ZZ/p^{a-\gamma}\ZZ \oplus \left ( \ZZ/p^{a}\ZZ \right )^{g-1},
\]
as desired.
\end{proof}

The statement of Theorem \ref{thm:1} is simple, but as the proof shows, the distinguished summand $\ZZ/p^{a-\gamma}\ZZ$ can be absorbed into the others (when $\gamma = 0$) or can disappear entirely (when $\gamma = a$).  We also saw that $\gamma$ is forced by the values of $e_{a}$ and $e_{0}$.  In \cite[Section 3]{brouwer:prank}, the authors calculate the $p$-ranks of matrices in a class that includes our $L$ (under the hypotheses of Theorem \ref{thm:1}) and they show that $e_{0}$ is determined by whether or not $p$ divides $\mu$.  We record this information in case it is of organizational value to the reader.

\begin{corollary}\label{cor:1}
Suppose $p \nmid r$ and let $a, \gamma$ be the (unique) nonnegative integers so that $p^{a} \exdiv s$ and $p^{\gamma} \exdiv v$.  Then exactly one of the following hold:
\begin{enumerate}
\item $\gamma = 0$, $p \mid \mu$, $e_{0} = f$ and $K(\Gamma) \cong \left ( \ZZ/p^{a}\ZZ \right )^{g}$,
\item $0 < \gamma < a$, $p \mid \mu$, $e_{0} = f$ and $K(\Gamma) \cong \ZZ/p^{a-\gamma}\ZZ \oplus \left ( \ZZ/p^{a}\ZZ \right )^{g-1}$,
\item $\gamma = a$, $p \nmid \mu$, $e_{0} = f+1$ and $K(\Gamma) \cong \left ( \ZZ/p^{a}\ZZ \right )^{g-1}$.
\end{enumerate}
The same statement holds if the roles of $r$ and $s$ are interchanged, and the roles of $f$ and $g$ are interchanged.
\end{corollary}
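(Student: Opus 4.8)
The plan is to reorganize the analysis already carried out in Theorem~\ref{thm:1} and its proof according to the value of $\gamma$, supplementing it in each case with the value of the $p$-rank $e_{0}$ and with the divisibility of $\mu$ by $p$. For the divisibility of $\mu$, evaluate Equation~\ref{eqn:srgB} on the all-ones vector: since $L\allone = 0$, the left-hand side is $(-r)(-s)\allone = rs\,\allone$ and the right-hand side is $\mu v\,\allone$, so $rs = \mu v$. Comparing $p$-adic valuations and using $p \nmid r$, $p^{a} \exdiv s$, and $p^{\gamma} \exdiv v$ yields $p^{\,a-\gamma} \exdiv \mu$; in particular $\gamma \le a$, while $a \ge 1$ because $p \mid \abs{K(\Gamma)}$ (as observed at the start of the proof of Theorem~\ref{thm:1}). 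Thus $\gamma$ lies in exactly one of $\{0\}$, $\{1,\dots,a-1\}$, $\{a\}$, which already yields the ``exactly one'' in the statement, and moreover $p \mid \mu$ precisely when $\gamma < a$ while $p \nmid \mu$ precisely when $\gamma = a$.

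Next I would recall from the proof of Theorem~\ref{thm:1} that it divides into three mutually exclusive branches, corresponding to the three possibilities for $\gamma$: the branch $e_{a} = g$ has $e_{0} = f$ and forces $\gamma = 0$; the branch $e_{a} = g-1$, $e_{0} = f$ forces $0 < \gamma < a$; and the branch $e_{a} = g-1$, $e_{0} = f+1$ forces $\gamma = a$. (The dichotomy $e_{0} \in \{f, f+1\}$ and its dependence on whether $p \mid \mu$ is also exactly the $p$-rank computation of \cite[Section~3]{brouwer:prank}.) Hence the value of $\gamma$ determines $e_{0}$, namely $e_{0} = f$ when $\gamma < a$ and $e_{0} = f+1$ when $\gamma = a$, and via Theorem~\ref{thm:1} it then determines $K_{p}(\Gamma)$.

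Assembling the pieces: if $\gamma = 0$ then $p \mid \mu$, $e_{0} = f$, and Theorem~\ref{thm:1} gives $K_{p}(\Gamma) \cong (\ZZ/p^{a}\ZZ)^{g}$, which is item~(1); if $0 < \gamma < a$ then $p \mid \mu$, $e_{0} = f$, and $K_{p}(\Gamma) \cong \ZZ/p^{a-\gamma}\ZZ \oplus (\ZZ/p^{a}\ZZ)^{g-1}$, which is item~(2); and if $\gamma = a$ then $p \nmid \mu$, $e_{0} = f+1$, and $K_{p}(\Gamma) \cong (\ZZ/p^{a}\ZZ)^{g-1}$, which is item~(3). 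The final sentence follows from the same $r \leftrightarrow s$, $f \leftrightarrow g$ symmetry used for Theorem~\ref{thm:1}. I do not anticipate a genuine obstacle, since all of the substantive work lives in Lemma~\ref{lem:eigen} and Theorem~\ref{thm:1} and the corollary is essentially a repackaging; the two points that need a little care are verifying that $e_{0} = f+1$ holds if and only if $\gamma = a$, and checking that the case $\gamma = 0$ genuinely falls under ``$p \mid \mu$'', which uses $a \ge 1$.
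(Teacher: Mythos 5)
Your argument is correct, and its skeleton is the same as the paper's: the corollary is a repackaging of the three mutually exclusive branches in the proof of Theorem~\ref{thm:1} (namely $e_{a}=g$; $e_{a}=g-1$ with $e_{0}=f+1$; and $e_{a}=g-1$ with $e_{0}=f$), which force $\gamma=0$, $\gamma=a$, and $0<\gamma<a$ respectively, so that $\gamma$ conversely pins down the branch and hence $e_{0}$ and $K_{p}(\Gamma)$ --- your observation that the disjointness of the three $\gamma$-ranges is what makes this reversal legitimate is exactly the right point to make explicit. Where you genuinely diverge is in the treatment of $\mu$: the paper imports the link between $e_{0}$ and the divisibility of $\mu$ by $p$ from the $p$-rank computations of \cite[Section 3]{brouwer:prank}, whereas you derive it internally by applying Equation~\ref{eqn:srgB} to the all-ones vector to get $rs=\mu v$ and then comparing $p$-adic valuations to obtain $v_{p}(\mu)=a-\gamma$. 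This is a modest but real gain: it makes the corollary self-contained, it yields the sharper statement $p^{a-\gamma}\exdiv\mu$ rather than a bare dichotomy, and it gives $\gamma\le a$ for free. The two points you flag for care --- that $a\ge 1$ (so the case $\gamma=0$ really does fall under $p\mid\mu$) and the equivalence $e_{0}=f+1\iff\gamma=a$ --- both go through as you describe.
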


%The eigenvalues $r$ and $s$ have equal standing in the proofs above.  By an identical argument we have the following results.
%
%\begin{theorem}\label{thm:2}
%Suppose $p^{a} \exdiv r$ and $p \nmid s$.  Let $p^{\gamma} \exdiv v$.  Then
%\[
%K_{p}(\Gamma) \cong \ZZ/p^{a-\gamma}\ZZ \oplus \left ( \ZZ/p^{a}\ZZ \right )^{f-1}.
%\]
%\end{theorem}
%
%\begin{corollary}
%Suppose $p^{a} \exdiv r$ and $p \nmid s$.  Let $p^{\gamma} \exdiv v$.  Then exactly one of the following hold:
%\begin{enumerate}
%\item $\gamma = 0$, $p \mid \mu$, $e_{0} = g$ and $K(\Gamma) \cong \left ( \ZZ/p^{a}\ZZ \right )^{f}$,
%\item $0 < \gamma < a$, $p \mid \mu$, $e_{0} = g$ and $K(\Gamma) \cong \ZZ/p^{a-\gamma}\ZZ \oplus \left ( \ZZ/p^{a}\ZZ \right )^{f-1}$,
%\item $\gamma = a$, $p \nmid \mu$, $e_{0} = g+1$ and $K(\Gamma) \cong \left ( \ZZ/p^{a}\ZZ \right )^{f-1}$.
%\end{enumerate}
%\end{corollary}

Let's apply these theorems with a few examples.
\begin{example}
It is unknown whether there exists a strongly regular graph $\Gamma$ with parameters $(190,84,33,40)$.  If such a graph exists then its nonzero Laplacian eigenvalues and multiplicities would have to be $r^{f} = 80^{133}$ and $s^{g} = 95^{56}$ (we are writing the multiplicities as exponents, as is custom in much of the literature).  Since $r = 16 \cdot 5$ and $s = 5 \cdot 19$, we can use the theorem above to compute the Sylow $2$- and $19$-subgroups of $K(\Gamma)$ (though it is easy to see that $K_{19}(\Gamma)$ is elementary abelian).  Let's compute $K_{2}(\Gamma)$:
\[
K_{2}(\Gamma) \cong \ZZ/2^{4-1}\ZZ \oplus \left ( \ZZ/2^{4}\ZZ \right )^{133-1} = \ZZ/8\ZZ \oplus \left ( \ZZ/16\ZZ \right )^{132}.
\]
\end{example}
\begin{example}
Conway's $99$-graph problem asks whether there exists a strongly regular graph $\Gamma$ with parameters $(99,14,1,2)$.  The nonzero Laplacian eigenvalues and multiplicities of such a graph would have to be $r^{f} = 11^{54}$ and $s^{g} = 18^{44}$.  Since $r$ and $s$ are relatively prime, we can apply our theorems to obtain the complete critical group.  We find
\[
K(\Gamma) \cong \left ( \ZZ/11\ZZ \right )^{53} \oplus \left ( \ZZ/2\ZZ \right )^{44} \oplus \left ( \ZZ/9\ZZ \right )^{43}.
\]
\end{example}

When $p$ divides both $r$ and $s$, it can occur that the critical group depends on the structure of the graph. Our next theorem shows that, in the simplest such case, this dependence is encoded entirely in the value of $e_0$.
  
\begin{theorem}\label{thm:3}
Suppose $p \exdiv r$ and $p \exdiv s$, and let $\gamma$ be the (unique) nonnegative integer so that $p^{\gamma} \exdiv v$.  Then
\[
K_{p}(\Gamma) \cong \left ( \ZZ/p\ZZ \right )^{f + g + \gamma - 2e_{0}} \oplus \left ( \ZZ/p^{2}\ZZ \right )^{e_{0} - \gamma}.
\]
\end{theorem}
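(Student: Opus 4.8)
The plan is to reduce the whole statement to a short computation with the elementary divisor multiplicities $e_{0}, e_{1}, e_{2}, \dots$, in the same spirit as the proof of Theorem \ref{thm:1}. Since $rs$ kills $K(\Gamma)$ and the hypotheses $p \exdiv r$ and $p \exdiv s$ give $p^{2} \exdiv rs$, the Sylow subgroup $K_{p}(\Gamma)$ is killed by $p^{2}$; hence $e_{i} = 0$ for every $i \geq 3$ and
\[
K_{p}(\Gamma) \cong \left ( \ZZ/p\ZZ \right )^{e_{1}} \oplus \left ( \ZZ/p^{2}\ZZ \right )^{e_{2}} .
\]
So it suffices to express $e_{1}$ and $e_{2}$ in terms of $e_{0}, f, g, \gamma$.

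First I would record two linear relations. Counting the $v$ diagonal entries of the Smith normal form of $L$ and sorting them by the exact power of $p$ they contain (there being no entries divisible by $p^{3}$) gives $e_{0} + e_{1} + e_{2} + 1 = v$, while diagonalizing $L$ over $\QQ$ gives $1 + f + g = v$; subtracting these yields
\[
e_{0} + e_{1} + e_{2} = f + g .
\]
Next, the matrix-tree theorem gives $|K(\Gamma)| = r^{f} s^{g}/v$, so passing to $p$-parts (using $p \exdiv r$, $p \exdiv s$, and $p^{\gamma} \exdiv v$) gives $|K_{p}(\Gamma)| = p^{f+g-\gamma}$; comparing with $|K_{p}(\Gamma)| = p^{e_{1}+2e_{2}}$, read off from the decomposition above, yields
\[
e_{1} + 2 e_{2} = f + g - \gamma .
\]

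The system is then solved in one step: subtracting the first relation from the second gives $e_{2} = e_{0} - \gamma$, and back-substituting gives $e_{1} = f + g + \gamma - 2 e_{0}$. Plugging these into the displayed decomposition of $K_{p}(\Gamma)$ is exactly the statement of the theorem. Non-negativity of the two exponents is automatic since they are multiplicities; equivalently, the argument shows $\gamma \leq e_{0}$ and $2 e_{0} \leq f + g + \gamma$, so the formula is always consistent.

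I do not expect a genuine obstacle here; in particular Lemma \ref{lem:eigen} is not needed. The one point worth emphasizing is the contrast with Theorem \ref{thm:1}: here the two counting relations involve the three unknowns $e_{0}, e_{1}, e_{2}$, and $e_{0}$ cannot be eliminated. Indeed the $p$-rank of $L$ need not be determined by $(v,k,\lambda,\mu)$ alone — the two $srg(16,6,2,2)$ graphs from the introduction realize this with $p=2$ — so $e_{0}$ must appear in the answer, and the content of the theorem is precisely that specifying $e_{0}$ pins down all of $K_{p}(\Gamma)$.
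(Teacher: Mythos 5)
Your proposal is correct and follows essentially the same route as the paper's proof: express $K_{p}(\Gamma)$ as $(\ZZ/p\ZZ)^{e_{1}} \oplus (\ZZ/p^{2}\ZZ)^{e_{2}}$ using the fact that $rs$ kills the critical group, derive the two linear relations from the Smith normal form count and the matrix-tree theorem, and solve for $e_{1}, e_{2}$ in terms of $e_{0}$. The paper likewise makes no use of Lemma \ref{lem:eigen} here, so your observation on that point is accurate.
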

\begin{proof}
The matrix-tree theorem gives us $\vert K_{p}(\Gamma) \vert = p^{f+g-\gamma}$, and since $p^{2} \exdiv rs$ we have
\[
K_{p}(\Gamma) \cong \left ( \ZZ/p\ZZ \right )^{e_{1}} \oplus \left ( \ZZ/p^{2}\ZZ \right )^{e_{2}}.
\]
In terms of the elementary divisor multiplicities, this can be expressed as
\begin{align*}
e_{0} + e_{1} + e_{2} &= f + g \\
e_{1} + 2e_{2} &= f + g - \gamma.
\end{align*}
Thus knowing any one of $e_{0}, e_{1}, e_{2}$ determines the others.  Taking $e_{0}$ to be free we compute 
\begin{align*}
e_{1} &= f + g + \gamma - 2e_{0} \\
e_{2} &= e_{0} - \gamma
\end{align*}
and the theorem is proved.
\end{proof}

\begin{example}
Consider the parameter set $(25,12,5,6)$.  We have that $r^{f} = 10^{12}$ and $s^{g} = 15^{12}$, so the prime $p=5$ is of particular interest.  This is, in fact, the first parameter set for which the hypotheses of Theorem \ref{thm:3} are satisfied and for which there is more than one graph with these parameters.  There are exactly $15$ strongly regular graphs with these parameters and adjacency matrices for them can be found at Ted Spence's website \cite{spence:website}. We let $\Gamma_{1}$ denote the graph having adjacency matrix given by the first matrix on Spence's list, which we reproduce here for convenience:

\[\mbox{\tiny$
\begin{bmatrix} 
0 & 1 & 1 & 1 & 1 & 1 & 1 & 1 & 1 & 1 & 1 & 1 & 1 & 0 & 0 & 0 & 0 & 0 & 0 & 0 & 0 & 0 & 0 & 0 & 0\\
1 & 0 & 1 & 1 & 1 & 1 & 1 & 0 & 0 & 0 & 0 & 0 & 0 & 1 & 1 & 1 & 1 & 1 & 1 & 0 & 0 & 0 & 0 & 0 & 0\\
1 & 1 & 0 & 1 & 1 & 1 & 1 & 0 & 0 & 0 & 0 & 0 & 0 & 0 & 0 & 0 & 0 & 0 & 0 & 1 & 1 & 1 & 1 & 1 & 1\\
1 & 1 & 1 & 0 & 0 & 0 & 0 & 1 & 1 & 1 & 0 & 0 & 0 & 1 & 1 & 1 & 0 & 0 & 0 & 1 & 1 & 1 & 0 & 0 & 0\\
1 & 1 & 1 & 0 & 0 & 0 & 0 & 1 & 0 & 0 & 1 & 1 & 0 & 1 & 0 & 0 & 1 & 1 & 0 & 1 & 0 & 0 & 1 & 1 & 0\\
1 & 1 & 1 & 0 & 0 & 0 & 0 & 0 & 1 & 0 & 1 & 0 & 1 & 0 & 1 & 0 & 1 & 0 & 1 & 0 & 1 & 0 & 1 & 0 & 1\\
1 & 1 & 1 & 0 & 0 & 0 & 0 & 0 & 0 & 1 & 0 & 1 & 1 & 0 & 0 & 1 & 0 & 1 & 1 & 0 & 0 & 1 & 0 & 1 & 1\\
1 & 0 & 0 & 1 & 1 & 0 & 0 & 0 & 1 & 1 & 1 & 0 & 0 & 1 & 0 & 1 & 0 & 0 & 1 & 0 & 0 & 0 & 1 & 1 & 1\\
1 & 0 & 0 & 1 & 0 & 1 & 0 & 1 & 0 & 1 & 0 & 1 & 0 & 0 & 1 & 0 & 1 & 1 & 0 & 0 & 1 & 0 & 0 & 1 & 1\\
1 & 0 & 0 & 1 & 0 & 0 & 1 & 1 & 1 & 0 & 0 & 0 & 1 & 0 & 0 & 0 & 1 & 1 & 1 & 1 & 0 & 1 & 1 & 0 & 0\\
1 & 0 & 0 & 0 & 1 & 1 & 0 & 1 & 0 & 0 & 0 & 1 & 1 & 1 & 1 & 0 & 0 & 0 & 1 & 1 & 0 & 1 & 0 & 0 & 1\\
1 & 0 & 0 & 0 & 1 & 0 & 1 & 0 & 1 & 0 & 1 & 0 & 1 & 0 & 1 & 1 & 0 & 1 & 0 & 1 & 1 & 0 & 0 & 1 & 0\\
1 & 0 & 0 & 0 & 0 & 1 & 1 & 0 & 0 & 1 & 1 & 1 & 0 & 1 & 0 & 1 & 1 & 0 & 0 & 0 & 1 & 1 & 1 & 0 & 0\\
0 & 1 & 0 & 1 & 1 & 0 & 0 & 1 & 0 & 0 & 1 & 0 & 1 & 0 & 0 & 1 & 1 & 1 & 0 & 0 & 1 & 1 & 0 & 0 & 1\\
0 & 1 & 0 & 1 & 0 & 1 & 0 & 0 & 1 & 0 & 1 & 1 & 0 & 0 & 0 & 1 & 1 & 0 & 1 & 1 & 0 & 1 & 0 & 1 & 0\\
0 & 1 & 0 & 1 & 0 & 0 & 1 & 1 & 0 & 0 & 0 & 1 & 1 & 1 & 1 & 0 & 0 & 0 & 1 & 0 & 1 & 0 & 1 & 1 & 0\\
0 & 1 & 0 & 0 & 1 & 1 & 0 & 0 & 1 & 1 & 0 & 0 & 1 & 1 & 1 & 0 & 0 & 1 & 0 & 0 & 0 & 1 & 1 & 1 & 0\\
0 & 1 & 0 & 0 & 1 & 0 & 1 & 0 & 1 & 1 & 0 & 1 & 0 & 1 & 0 & 0 & 1 & 0 & 1 & 1 & 1 & 0 & 0 & 0 & 1\\
0 & 1 & 0 & 0 & 0 & 1 & 1 & 1 & 0 & 1 & 1 & 0 & 0 & 0 & 1 & 1 & 0 & 1 & 0 & 1 & 0 & 0 & 1 & 0 & 1\\
0 & 0 & 1 & 1 & 1 & 0 & 0 & 0 & 0 & 1 & 1 & 1 & 0 & 0 & 1 & 0 & 0 & 1 & 1 & 0 & 1 & 1 & 1 & 0 & 0\\
0 & 0 & 1 & 1 & 0 & 1 & 0 & 0 & 1 & 0 & 0 & 1 & 1 & 1 & 0 & 1 & 0 & 1 & 0 & 1 & 0 & 0 & 1 & 0 & 1\\
0 & 0 & 1 & 1 & 0 & 0 & 1 & 0 & 0 & 1 & 1 & 0 & 1 & 1 & 1 & 0 & 1 & 0 & 0 & 1 & 0 & 0 & 0 & 1 & 1\\
0 & 0 & 1 & 0 & 1 & 1 & 0 & 1 & 0 & 1 & 0 & 0 & 1 & 0 & 0 & 1 & 1 & 0 & 1 & 1 & 1 & 0 & 0 & 1 & 0\\
0 & 0 & 1 & 0 & 1 & 0 & 1 & 1 & 1 & 0 & 0 & 1 & 0 & 0 & 1 & 1 & 1 & 0 & 0 & 0 & 0 & 1 & 1 & 0 & 1\\
0 & 0 & 1 & 0 & 0 & 1 & 1 & 1 & 1 & 0 & 1 & 0 & 0 & 1 & 0 & 0 & 0 & 1 & 1 & 0 & 1 & 1 & 0 & 1 & 0\end{bmatrix}$}.
\]

  For another $srg(25, 12, 5, 6)$, we let $\Gamma_{2}$ be the Paley graph on $25$ vertices.  Using SAGE, we compute:
\[
K_{5}(\Gamma_{1}) \cong \left ( \ZZ/5\ZZ \right )^{2} \oplus \left ( \ZZ/25\ZZ \right )^{10} \quad \mbox{(so $e_{0} = 12$)}
\]
and
\[
K_{5}(\Gamma_{2}) \cong \left ( \ZZ/5\ZZ \right )^{8} \oplus \left ( \ZZ/25\ZZ \right )^{7} \quad \mbox{(so $e_{0} = 9$)}.
\]
Our Theorem \ref{thm:3} predicts 
\[
K_{5}(\Gamma) \cong \left ( \ZZ/5\ZZ \right )^{26 - 2e_{0}} \oplus \left ( \ZZ/25\ZZ \right )^{e_{0}-2},
\]
which agrees with these computations.
\end{example}
\begin{example}
It is unknown whether there exists a strongly regular graph $\Gamma$ with parameters $(88,27,6,9)$. By the results above, the critical group would be specified uniquely by $3$-rank. Indeed, if such a graph existed, we would have $r^{f} = 24^{55}$ and $s^{g} = 33^{32}$. Theorem \ref{thm:1} specifies the Sylow 2- and 11-subgroups, so the only mystery in knowing $K(\Gamma)$ is knowing $K_{3}(\Gamma)$, which is given in terms of the 3-rank by Theorem \ref{thm:3}: 
\[
K_{3}(\Gamma) \cong \left ( \ZZ/3\ZZ \right )^{87 - 2e_{0}} \oplus \left ( \ZZ/9\ZZ \right )^{e_{0}}.
\]
\end{example}

To summarize thus far:  under the hypotheses of Theorem \ref{thm:1} the structure of $K_{p}(\Gamma)$ is forced, and under the hypothesis of Theorem \ref{thm:3} the $p$-rank of $L$ determines $K_{p}(\Gamma)$.  Under the hypotheses of the next theorem, the $p$-rank of $L$ determines $K_{p}(\Gamma)$ to within two possibilities.

\begin{theorem}\label{thm:4}
Suppose $p \exdiv r$ and $p^{2} \exdiv s$, and let $\gamma$ be the (unique) nonnegative integer so that $p^{\gamma} \exdiv v$.  Then either
\[
K_{p}(\Gamma) \cong \left ( \ZZ/p\ZZ \right )^{f-e_{0}} \oplus \left ( \ZZ/p^{2}\ZZ \right )^{g + \gamma - e_{0}} \oplus \left ( \ZZ/p^{3}\ZZ \right )^{e_{0} - \gamma}
\]
or
\[
K_{p}(\Gamma) \cong \left ( \ZZ/p\ZZ \right )^{f+1-e_{0}} \oplus \left ( \ZZ/p^{2}\ZZ \right )^{g + \gamma - 2 - e_{0}} \oplus \left ( \ZZ/p^{3}\ZZ \right )^{e_{0} - \gamma + 1}.
\]
Furthermore, if $\gamma = 0$ then 
\[
K_{p}(\Gamma) \cong \left ( \ZZ/p\ZZ \right )^{f-e_{0}} \oplus \left ( \ZZ/p^{2}\ZZ \right )^{g - e_{0}} \oplus \left ( \ZZ/p^{3}\ZZ \right )^{e_{0}}.
\]
The same statement holds if the roles of $r$ and $s$ are interchanged, and the roles of $f$ and $g$ are interchanged.
\end{theorem}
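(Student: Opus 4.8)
The plan is to follow the template of the proofs of Theorems \ref{thm:1} and \ref{thm:3}: write down the linear constraints that the elementary divisor multiplicities $e_{0},e_{1},e_{2},e_{3}$ must satisfy, solve them with $e_{0}$ and one further quantity left free, and then use Lemma \ref{lem:eigen} to cut the remaining freedom down to two cases. Since $rs$ kills $K(\Gamma)$ and $p^{3}\exdiv rs$, we have $K_{p}(\Gamma)\cong(\ZZ/p\ZZ)^{e_{1}}\oplus(\ZZ/p^{2}\ZZ)^{e_{2}}\oplus(\ZZ/p^{3}\ZZ)^{e_{3}}$ with $e_{j}=0$ for $j\ge 4$. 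Comparing the number of diagonal entries of the Smith normal form of $L$ with the eigenvalue multiplicities gives $e_{0}+e_{1}+e_{2}+e_{3}=f+g$, while the matrix-tree theorem gives $|K_{p}(\Gamma)|=p^{f+2g-\gamma}$, that is, $e_{1}+2e_{2}+3e_{3}=f+2g-\gamma$. Taking $e_{0}$ and $e_{3}$ as the free parameters, these two identities force $e_{1}=f+\gamma-2e_{0}+e_{3}$ and $e_{2}=g-\gamma+e_{0}-2e_{3}$.

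Next I would bound $e_{3}$. Applying the second bullet of Lemma \ref{lem:eigen} to the (rational) $r$-eigenspace (here $p\exdiv r$, multiplicity $f$) gives $f\le e_{0}+e_{1}$, which after substitution reads $e_{3}\ge e_{0}-\gamma$; applying the first bullet of Lemma \ref{lem:eigen} to the $s$-eigenspace with $i=2$ (here $p^{2}\mid s$, multiplicity $g$) gives $g\le 1+e_{2}+e_{3}$, which after substitution reads $e_{3}\le e_{0}-\gamma+1$. Hence $e_{3}\in\{e_{0}-\gamma,\,e_{0}-\gamma+1\}$, and plugging each of the two values back into the expressions for $e_{1}$ and $e_{2}$ produces exactly the two displayed possibilities for $K_{p}(\Gamma)$.

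For the $\gamma=0$ refinement I would sharpen the upper bound on $e_{3}$ using the all-ones vector $\mathbf{1}$. Since $L\mathbf{1}=0$ we have $\mathbf{1}\in M_{2}$, and since $p^{2}\mid s$ we have $V_{s}\cap\ZZ^{\mathcal{V}}\subseteq M_{2}$, where $V_{s}$ denotes the rational $s$-eigenspace; moreover every vector in $V_{s}\cap\ZZ^{\mathcal{V}}$ has coordinate sum $0$, because $V_{s}$ is orthogonal to the $0$-eigenspace $\langle\mathbf{1}\rangle$. When $p\nmid v$ the reduction $\overline{\mathbf{1}}$ has coordinate sum a unit modulo $p$, hence cannot lie in the $g$-dimensional space $\overline{V_{s}\cap\ZZ^{\mathcal{V}}}$, so $\overline{M_{2}}$ contains a subspace of dimension $g+1$. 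Since $\dim_{p}\overline{M_{2}}=1+e_{2}+e_{3}$ by Equation \ref{eqn:Mi}, this yields $g\le e_{2}+e_{3}$, i.e.\ $e_{3}\le e_{0}$; combined with $e_{3}\ge e_{0}-\gamma=e_{0}$ we conclude $e_{3}=e_{0}$, which is the first of the two cases and gives the stated form of $K_{p}(\Gamma)$. The statement with $r$ and $s$ (and $f$ and $g$) interchanged then follows immediately by relabeling.

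The step I expect to be the main obstacle is this $\gamma=0$ refinement: the two generic inequalities from Lemma \ref{lem:eigen} genuinely permit both cases in general, so one needs the extra input that, when $p\nmid v$, the all-ones vector is linearly independent modulo $p$ from every integral eigenvector for a nonzero eigenvalue — this is what breaks the tie. Everything else is routine bookkeeping with the two counting identities.
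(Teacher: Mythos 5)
Your proposal is correct. The first half — the two counting identities, the bounds $f \le e_0 + e_1$ and $g \le 1 + e_2 + e_3$ from Lemma \ref{lem:eigen}, and the observation that the total slack is exactly one — is the paper's argument essentially verbatim, just parametrized by $e_3 \in \{e_0-\gamma,\, e_0-\gamma+1\}$ rather than by which of the two inequalities is tight. Where you genuinely diverge is the $\gamma = 0$ refinement. The paper works over the $p$-local integers $\ZZ_{(p)}$: since $p \nmid v$ one has $\ZZ_{(p)}^{\mathcal{V}} = \ZZ_{(p)}\allone \oplus Y$ with $L$ respecting the splitting, and Equation \ref{eqn:srgB} restricted to $Y$ gives $L\restr{Y}(L\restr{Y}-(r+s)I) = -rsI$ with $p^3 \exdiv rs$; the resulting Smith-normal-form duality between $L\restr{Y}$ and $L\restr{Y}-(r+s)I$, together with the fact that these two matrices are congruent mod $p$, yields $e_0 = e_3$. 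You instead sharpen the eigenvalue bound directly: $\overline{\allone}$ lies in $\overline{M_2}$ but, when $p \nmid v$, not in the coordinate-sum-zero hyperplane containing the $g$-dimensional space $\overline{V_s \cap \ZZ^{\mathcal{V}}}$, so $\dim_p \overline{M_2} \ge g+1$, hence $g \le e_2 + e_3$ and $e_3 = e_0$. Both arguments use $p \nmid v$ in the same conceptual place (the all-ones vector splits off mod $p$), and both correctly fail when $\gamma > 0$, as they must by Example \ref{ex:chang}. Yours is the more elementary route — it stays entirely within the $M_i$/$N_i$ framework already set up for Lemma \ref{lem:eigen} and needs no localization or Smith-form duality — while the paper's argument buys a structural symmetry of the Smith normal form of $L\restr{Y}$ that would be the natural tool for handling higher powers of $p$ dividing $rs$.
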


\begin{proof}
Since $p \exdiv r$ and $p^{2} \exdiv s$, we have $p^{3} \exdiv rs$ and so
\[
K_{p}(\Gamma) \cong \left ( \ZZ/p\ZZ \right )^{e_{1}} \oplus \left ( \ZZ/p^{2}\ZZ \right )^{e_{2}} \oplus \left ( \ZZ/p^{3}\ZZ \right )^{e_{3}}.
\]
From this general form and the matrix-tree theorem we get the equations
\begin{align}
e_{0} + e_{1} + e_{2} + e_{3}  &= f + g  \nonumber \\
e_{1} + 2e_{2} + 3e_{3} &= f + 2g - \gamma. \label{eqn:extra}
\end{align}
Applying Lemma \ref{lem:eigen}, we have the bounds
\begin{align*}
f &\leq \dim \overline{N_{1}} = e_{0} + e_{1} \\
g &\leq \dim \overline{M_{2}} = e_{2} + e_{3} + 1. 
\end{align*}
The left sides of the above inequalities sum to $f+g$, while the right sides sum to $f+g+1$.  Thus we have our two possibilities: 
\[
f = e_{0} + e_{1} \mbox{ and } g+1 = e_{2} + e_{3} + 1
\]
or
\[
f+1 = e_{0} + e_{1} \mbox{ and } g = e_{2} + e_{3} + 1.
\]
In either case, with these two equations and Equation \ref{eqn:extra} we see that knowing one of $e_{0}, e_{1}, e_{2}, e_{3}$ forces the values of the others.  The first part of the theorem follows.

Now assume that $\gamma = 0$.  We just want to show that $K_{p}(\Gamma)$ must be the first possibility in the statement of the theorem.  Let $\ZZ_{(p)}$ be the ring of $p$-local integers, i.e. rational numbers that can be written as fractions with denominators coprime to $p$.  We can view $L$ has having entries coming from $\ZZ_{(p)}$ and if we do this, then $L$ defines a homomorphism of free $\ZZ_{(p)}$-modules
\[
L \colon \ZZ_{(p)}^{\mathcal{V}} \to \ZZ_{(p)}^{\mathcal{V}}.
\]
The Smith normal form of $L$ over this ring is the same as over the integers, but as primes different from $p$ are now units we may ignore them.  One advantage of this point of view is the following.  Since the number of vertices is not divisible by $p$, we have the decomposition
\[
\ZZ_{(p)}^{\mathcal{V}} = \ZZ_{(p)}\allone \oplus Y,
\]
where $Y = \big\{ \sum_{v \in \mathcal{V} }a_{v}v \in \ZZ_{(p)}^{\mathcal{V}} \, \vert \, \sum_{v \in \mathcal{V}} a_{v} = 0 \big\}$.  The Laplacian map respects this decomposition and this means that the $p$-elementary divisor multiplicities are the same for both $L$ and the restricted map
\[
L\restr{Y} \colon Y \to Y.
\]
The transformation defined by the all-ones matrix $J$ is zero on $Y$, therefore we get from Equation \ref{eqn:srgB}
\[
L\restr{Y} \left ( L\restr{Y} - (r+s)I \right ) = -rsI.
\]
Since $p^{3} \exdiv rs$, the equation above shows a symmetry of Smith normal forms:  the multiplicity of $p^{i}$ as an elementary divisor of $L\restr{Y}$ is equal to the multiplicity of $p^{3-i}$ as an elementary divisor of $L\restr{Y} - (r+s)I$.  Since $L\restr{Y}$ and $L\restr{Y} - (r+s)I$ are congruent modulo $p$, they must have the same $p$-rank.  The last two sentences imply that $e_{0} = e_{3}$ for our Laplacian $L$, so $K_{p}(\Gamma)$ must take the first form in the statement of the theorem.
\end{proof}

%When the roles of $r$ and $s$ are reversed, an identical argument gives
%\begin{theorem}\label{thm:5}
%Suppose $p^{2} \exdiv r$ and $p \exdiv s$.  Let $p^{\gamma} \exdiv v$.  Then either
%\[
%K_{p}(\Gamma) \cong \left ( \ZZ/p\ZZ \right )^{g-e_{0}} \oplus \left ( \ZZ/p^{2}\ZZ \right )^{f + \gamma - e_{0}} \oplus \left ( \ZZ/p^{3}\ZZ \right )^{e_{0} - \gamma}
%\]
%or
%\[
%K_{p}(\Gamma) \cong \left ( \ZZ/p\ZZ \right )^{g+1-e_{0}} \oplus \left ( \ZZ/p^{2}\ZZ \right )^{f + \gamma - 2 - e_{0}} \oplus \left ( \ZZ/p^{3}\ZZ \right )^{e_{0} - \gamma + 1}.
%\]
%\end{theorem}

\begin{example}  The famous missing Moore graph would have to be an $srg(3250, 57, 0, 1)$, if it exists.  From these parameters, we have $r^{f} = 50^{1729} $ and $s^{g} = 65^{1520}$, and the interesting prime is $p=5$. From Theorem \ref{thm:4}, we get
\[
K_{5}(\Gamma) \cong \left ( \ZZ/5\ZZ \right )^{1520-e_{0}} \oplus \left ( \ZZ/25\ZZ \right )^{1732 - e_{0}} \oplus \left ( \ZZ/125\ZZ \right )^{e_{0} - 3}
\]
or
\[
K_{5}(\Gamma) \cong \left ( \ZZ/5\ZZ \right )^{1521-e_{0}} \oplus \left ( \ZZ/25\ZZ \right )^{1730 - e_{0}} \oplus \left ( \ZZ/125\ZZ \right )^{e_{0} - 2}.
\] 
(Note $\gamma = 3$.) This example first appeared in \cite{ducey:moore}.
\end{example}

\begin{example}
The Schl\"{a}fli graph is the unique $srg(27,16,10,8)$; denote it by $\Gamma$.  We have $r^{f} = 12^{6}$ and $s^{g} = 18^{20}$.  We can apply Theorem \ref{thm:4} to the prime $p=2$, and since $\gamma=0$ we must have
\[
K_{2}(\Gamma) \cong \left ( \ZZ/2\ZZ \right )^{20 - e_{0}} \oplus \left ( \ZZ/4\ZZ \right )^{6 - e_{0}} \oplus \left ( \ZZ/8\ZZ \right )^{e_{0}}.
\]
Using SAGE we find that the $2$-rank of $L$ is $6$ and also that 
\[
K_{2}(\Gamma) \cong \left ( \ZZ/2\ZZ \right )^{14} \oplus \left ( \ZZ/8\ZZ \right )^{6}
\]
which matches our prediction.
\end{example}

\begin{example}\label{ex:chang}
Let $\Gamma_{1}$ denote the complement of any one of the three Chang graphs.  Let $\Gamma_{2}$ denote the Kneser graph on the $2$-subsets of an $8$-element set (so adjacent when disjoint).  Both of these graphs are examples of an $srg(28, 15, 6, 10)$.  We have $r^{f} = 14^{20}$ and $s^{g} = 20^{7}$, and so Theorem \ref{thm:4} applies to the prime $p=2$ (note $\gamma = 2$).  

According to SAGE, the Laplacian of $\Gamma_{1}$ has $2$-rank equal to $8$ and
\[
K_{2}(\Gamma_{1}) \cong \left ( \ZZ/2\ZZ \right )^{12} \oplus \ZZ/4\ZZ \oplus \left ( \ZZ/8\ZZ \right )^{6}.
\]
Similarly, for $\Gamma_{2}$, the computer tells us that the Laplacian $2$-rank is $7$ and
\[
K_{2}(\Gamma_{2}) \cong \left ( \ZZ/2\ZZ \right )^{14} \oplus \left ( \ZZ/8\ZZ \right )^{6}.
\]
This illustrates that both of the cases described in Theorem \ref{thm:4} can occur.
\end{example}

\begin{remark}
Checking many strongly regular graphs on up to $36$ vertices (we did not check all of the $32548$ graphs with parameters $(36,15,6,6)$) the authors have not found a pair of graphs with the same parameters, the same $p$-rank, and demonstrating the separate cases of Theorem \ref{thm:4} (note the $2$-ranks are different in Example \ref{ex:chang}). So maybe, even under the hypotheses of Theorem \ref{thm:4}, the $p$-rank does determine $K_{p}(\Gamma)$.
\end{remark}

Our final example applies the theory to give an elementary proof that no $srg(28,9,0,4)$ exists.

\begin{example}\label{ex:28}
Suppose that a strongly regular graph with parameters $(28,9,0,4)$ exists. Denote it by $\Gamma$, and let $L$ be its Laplacian, which we may view as a matrix by ordering the vertices. We must have $r^f=8^{21}$ and $s^g=14^{6}$.  The matrix equation \ref{eqn:srgB} reads
\begin{equation}
\label{eqn1}
(L-14I)(L - 8I) = 4J,
\end{equation}
where $J$ is the matrix of all-ones.

To motivate our choices below, we note that this graph is red on Brouwer's list.  We know it does not actually exist since it contradicts the `absolute bound' $28 \leq 6(6+3)/2$ (it also contradicts one of the Krein inequalities).  If we are looking for a Smith normal form or $p$-rank argument, this suggests that we might look at the prime 7, which divides the eigenvalue with multiplicity that is too small according to this bound.

Returning to our argument, let $F = \ZZ/7\ZZ$ be the field of $7$ elements, and write $\overline{L}$ for the matrix $L$ with entries viewed as coming from $F$.  From Corollary \ref{cor:1}, the rank of $\overline{L}$ is 22, and so the dimension of $\ker \overline{L}$ is 6.  We can thus arrive at a contradiction if we exhibit more than 6 independent vectors in $\ker \overline{L}$. 

Fix two adjacent vertices, call them $x$ and $y$.  Let $X$ denote the 8 vertices other than $y$ that are adjacent to $x$, and let $Y$ denote the 8 vertices other than $x$ adjacent to $y$.   Since $\lambda = 0$, the sets $X$ and $Y$ have empty intersection.  Let $Z$ consist of the ten other vertices not in $\{x\} \cup \{y\} \cup X \cup Y$.  Let $z$ be a vertex in $Z$.  Since $\mu = 4$, four edges from $z$ must enter $X$ and four edges must enter $Y$.  This leaves one edge to connect $z$ to another vertex in $Z$.  It follows that the induced subgraph on $Z$ is five disjoint copies of $P_2$, the path graph on two vertices (i.e., an edge between two vertices).  Adding in vertices $x$ and $y$, the induced subgraph is then six copies of $P_2$.  

Each of these copies of $P_2$ can be used to build a vector in $\ker \overline{L}$.  The matrix equation \ref{eqn1} shows us how: Working modulo 7, the equation reads: $\overline{L}(\overline{L} - I) = 4J.$  Thus $\overline{L}$ maps any column of $\overline{L} - I$ to $4\allone$, where $\allone$ is the vector of all-ones.  Thus, the difference of any two columns of the $\overline{L} - I$ will be in $\ker \overline{L}$.  To be concrete, supposed we built our Laplacian matrix by ordering the vertices as follows:  $x$, $y$, then the vertices in $Z$, then the vertices in $X$, then the vertices in $Y$.  Take the column of $\overline{L} - I$ that is indexed by $x$ and the column that is indexed by $y$ and subtract them. The result, still working modulo 7, is expressed in the first column of the following matrix (we discuss the remaining columns momentarily). 
%\[
%\begin{bmatrix} 2 & -1 & 0&0&0&0&0&0&0&0&0&0& -1&-1&-1&-1&-1&-1&-1&-1&1&1&1&1&1&1&1&1 \end{bmatrix}
%[2,-2, 0,0,0,0,0,0,0,0,0,0,0,-1,-1,-1,-1,-1,-1,-1,-1,1,1,1,1,1,1,1,1]^{t}
%\]

\[
C = \begin{bmatrix} 
2  &0&0&0&0&0\\
-2 &0&0&0&0&0\\
0&2&0&0&0&0\\
0&-2&0&0&0&0\\
0&0&2&0&0&0\\
0&0&-2&0&0&0\\
0&0&0&2&0&0\\
0&0&0&-2&0&0\\
0&0&0&0&2&0\\
0&0&0&0&-2&0\\
0&0&0&0&0&2\\
0&0&0&0&0&-2\\
{\bf -1}_8&{\bf ?}_8 & {\bf ?}_8 & {\bf ?}_8& {\bf ?}_8& {\bf ?}_8\\
{\bf 1}_8&{\bf ?}_8 & {\bf ?}_8 & {\bf ?}_8& {\bf ?}_8& {\bf ?}_8\\
\end{bmatrix}
\]
Here ${\bf k}_8$ denotes $8$ repeated vertical entries of the number $k$, and ${\bf ?}_8$ denotes $8$ vertical entries with unknown value.

%\[
%\begin{bmatrix} 
%2\\
%-2\\
%0\\
%0\\
%0\\
%0\\
%0\\
%0\\
%0\\
%0\\
%0\\
%0\\
%-1\\
%-1\\
%-1\\
%-1\\
%-1\\
%-1\\
%-1\\
%-1\\
%1\\
%1\\
%1\\
%1\\
%1\\
%1\\
%1\\
%1
%\end{bmatrix}
%\]

Suppose further that we ordered the vertices so that the next two vertices (which are in $Z$) are adjacent, and the two vertices after that (still in $Z$) are adjacent, etc.  Then as we just considered the difference between the first and second columns of $\overline{L} - I$, also consider the difference between the third and fourth, fifth and sixth, $\ldots$, eleventh and twelfth.  If we throw all of these six columns into a matrix, we obtain the matrix $C$ above.

Clearly these six columns are independent and so form a basis for $\ker \overline{L}$.  But don't forget that $\allone$ is also in $\ker \overline{L}$, and (as is not hard to check) is not an $F$-linear combination of these six vectors. Thus we have seven vectors in the kernel, which is a contradiction to our dimension count above.
\end{example}

In the example above, all that was really used was the $7$-rank of $L$ (which can be obtained from \cite{brouwer:prank}); we did not need the full information given by the critical group.  Perhaps a more sophisticated use of these strategies can employ the other information in the Smith normal form to eliminate further parameter sets.

\section{Acknowledgements}
We are grateful for support from the National Science Foundation (grant number NSF-DMS 1560151).

\section{Appendix}
We include in this appendix feasible parameter sets for strongly regular graphs with nonzero integer Laplacian eigenvalues $r_{L}$ and $s_{L}$, for graphs with less than 200 vertices, so that the reader may easily apply the results of the paper.  See Andries Brouwer's website \cite{brouwer:website} or the Handbook of Combinatorial Designs \cite{handbook} for more detailed information, including graph constructions and existence data.  Note that in those sources, $r$ and $s$ refer to eigenvalues of an adjacency matrix of such a graph.

%\begin{adjustwidth}{-3cm}{}
\scalebox{.6}{	
											
\begin{centering}
\begin{tabular}{rrrr|rr|rrr||rrrrr|rr|rrr||rrrrr|rr|rr}
$v$	&	$k$	&	$\lambda$	&	$\mu$	&	$r_L$	&	$f$	&	$s_L$	&	$g$	&  &  &     $v$	&	$k$	&	$\lambda$	&	$\mu$	&	$r_L$	&	$f$	&	$s_L$	&	$g$	& & &     $v$	&	$k$	&	$\lambda$	&	$\mu$	&	$r_L$	&	$f$	&	$s_L$	&	$g$\\ \hline
9	&	4	&	1	&	2	&	3	&	4	&	6	&	4	&	&	&	64	&	49	&	36	&	42	&	48	&	49	&	56	&	14	&	&	&	96	&	45	&	24	&	18	&	36	&	20	&	48	&	75	\\
10	&	3	&	0	&	1	&	2	&	5	&	5	&	4	&	&	&	64	&	18	&	2	&	6	&	16	&	45	&	24	&	18	&	&	&	96	&	50	&	22	&	30	&	48	&	75	&	60	&	20	\\
10	&	6	&	3	&	4	&	5	&	4	&	8	&	5	&	&	&	64	&	45	&	32	&	30	&	40	&	18	&	48	&	45	&	&	&	99	&	14	&	1	&	2	&	11	&	54	&	18	&	44	\\
15	&	6	&	1	&	3	&	5	&	9	&	9	&	5	&	&	&	64	&	21	&	0	&	10	&	20	&	56	&	32	&	7	&	&	&	99	&	84	&	71	&	72	&	81	&	44	&	88	&	54	\\
15	&	8	&	4	&	4	&	6	&	5	&	10	&	9	&	&	&	64	&	42	&	30	&	22	&	32	&	7	&	44	&	56	&	&	&	99	&	42	&	21	&	15	&	33	&	21	&	45	&	77	\\
16	&	5	&	0	&	2	&	4	&	10	&	8	&	5	&	&	&	64	&	21	&	8	&	6	&	16	&	21	&	24	&	42	&	&	&	99	&	56	&	28	&	36	&	54	&	77	&	66	&	21	\\
16	&	10	&	6	&	6	&	8	&	5	&	12	&	10	&	&	&	64	&	42	&	26	&	30	&	40	&	42	&	48	&	21	&	&	&	99	&	48	&	22	&	24	&	44	&	54	&	54	&	44	\\
16	&	6	&	2	&	2	&	4	&	6	&	8	&	9	&	&	&	64	&	27	&	10	&	12	&	24	&	36	&	32	&	27	&	&	&	99	&	50	&	25	&	25	&	45	&	44	&	55	&	54	\\
16	&	9	&	4	&	6	&	8	&	9	&	12	&	6	&	&	&	64	&	36	&	20	&	20	&	32	&	27	&	40	&	36	&	&	&	100	&	18	&	8	&	2	&	10	&	18	&	20	&	81	\\
21	&	10	&	3	&	6	&	9	&	14	&	14	&	6	&	&	&	64	&	28	&	12	&	12	&	24	&	28	&	32	&	35	&	&	&	100	&	81	&	64	&	72	&	80	&	81	&	90	&	18	\\
21	&	10	&	5	&	4	&	7	&	6	&	12	&	14	&	&	&	64	&	35	&	18	&	20	&	32	&	35	&	40	&	28	&	&	&	100	&	22	&	0	&	6	&	20	&	77	&	30	&	22	\\
25	&	8	&	3	&	2	&	5	&	8	&	10	&	16	&	&	&	64	&	30	&	18	&	10	&	20	&	8	&	32	&	55	&	&	&	100	&	77	&	60	&	56	&	70	&	22	&	80	&	77	\\
25	&	16	&	9	&	12	&	15	&	16	&	20	&	8	&	&	&	64	&	33	&	12	&	22	&	32	&	55	&	44	&	8	&	&	&	100	&	27	&	10	&	6	&	20	&	27	&	30	&	72	\\
25	&	12	&	5	&	6	&	10	&	12	&	15	&	12	&	&	&	66	&	20	&	10	&	4	&	12	&	11	&	22	&	54	&	&	&	100	&	72	&	50	&	56	&	70	&	72	&	80	&	27	\\
26	&	10	&	3	&	4	&	8	&	13	&	13	&	12	&	&	&	66	&	45	&	28	&	36	&	44	&	54	&	54	&	11	&	&	&	100	&	33	&	8	&	12	&	30	&	66	&	40	&	33	\\
26	&	15	&	8	&	9	&	13	&	12	&	18	&	13	&	&	&	69	&	20	&	7	&	5	&	15	&	23	&	23	&	45	&	&	&	100	&	66	&	44	&	42	&	60	&	33	&	70	&	66	\\
27	&	10	&	1	&	5	&	9	&	20	&	15	&	6	&	&	&	69	&	48	&	32	&	36	&	46	&	45	&	54	&	23	&	&	&	100	&	33	&	14	&	9	&	25	&	24	&	36	&	75	\\
27	&	16	&	10	&	8	&	12	&	6	&	18	&	20	&	&	&	70	&	27	&	12	&	9	&	21	&	20	&	30	&	49	&	&	&	100	&	66	&	41	&	48	&	64	&	75	&	75	&	24	\\
28	&	9	&	0	&	4	&	8	&	21	&	14	&	6	&	&	&	70	&	42	&	23	&	28	&	40	&	49	&	49	&	20	&	&	&	100	&	33	&	18	&	7	&	20	&	11	&	35	&	88	\\
28	&	18	&	12	&	10	&	14	&	6	&	20	&	21	&	&	&	75	&	32	&	10	&	16	&	30	&	56	&	40	&	18	&	&	&	100	&	66	&	39	&	52	&	65	&	88	&	80	&	11	\\
28	&	12	&	6	&	4	&	8	&	7	&	14	&	20	&	&	&	75	&	42	&	25	&	21	&	35	&	18	&	45	&	56	&	&	&	100	&	36	&	14	&	12	&	30	&	36	&	40	&	63	\\
28	&	15	&	6	&	10	&	14	&	20	&	20	&	7	&	&	&	76	&	21	&	2	&	7	&	19	&	56	&	28	&	19	&	&	&	100	&	63	&	38	&	42	&	60	&	63	&	70	&	36	\\
35	&	16	&	6	&	8	&	14	&	20	&	20	&	14	&	&	&	76	&	54	&	39	&	36	&	48	&	19	&	57	&	56	&	&	&	100	&	44	&	18	&	20	&	40	&	55	&	50	&	44	\\
35	&	18	&	9	&	9	&	15	&	14	&	21	&	20	&	&	&	76	&	30	&	8	&	14	&	28	&	57	&	38	&	18	&	&	&	100	&	55	&	30	&	30	&	50	&	44	&	60	&	55	\\
36	&	10	&	4	&	2	&	6	&	10	&	12	&	25	&	&	&	76	&	45	&	28	&	24	&	38	&	18	&	48	&	57	&	&	&	100	&	45	&	20	&	20	&	40	&	45	&	50	&	54	\\
36	&	25	&	16	&	20	&	24	&	25	&	30	&	10	&	&	&	76	&	35	&	18	&	14	&	28	&	19	&	38	&	56	&	&	&	100	&	54	&	28	&	30	&	50	&	54	&	60	&	45	\\
36	&	14	&	4	&	6	&	12	&	21	&	18	&	14	&	&	&	76	&	40	&	18	&	24	&	38	&	56	&	48	&	19	&	&	&	105	&	26	&	13	&	4	&	15	&	14	&	28	&	90	\\
36	&	21	&	12	&	12	&	18	&	14	&	24	&	21	&	&	&	77	&	16	&	0	&	4	&	14	&	55	&	22	&	21	&	&	&	105	&	78	&	55	&	66	&	77	&	90	&	90	&	14	\\
36	&	14	&	7	&	4	&	9	&	8	&	16	&	27	&	&	&	77	&	60	&	47	&	45	&	55	&	21	&	63	&	55	&	&	&	105	&	32	&	4	&	12	&	30	&	84	&	42	&	20	\\
36	&	21	&	10	&	15	&	20	&	27	&	27	&	8	&	&	&	78	&	22	&	11	&	4	&	13	&	12	&	24	&	65	&	&	&	105	&	72	&	51	&	45	&	63	&	20	&	75	&	84	\\
36	&	15	&	6	&	6	&	12	&	15	&	18	&	20	&	&	&	78	&	55	&	36	&	45	&	54	&	65	&	65	&	12	&	&	&	105	&	40	&	15	&	15	&	35	&	48	&	45	&	56	\\
36	&	20	&	10	&	12	&	18	&	20	&	24	&	15	&	&	&	81	&	16	&	7	&	2	&	9	&	16	&	18	&	64	&	&	&	105	&	64	&	38	&	40	&	60	&	56	&	70	&	48	\\
40	&	12	&	2	&	4	&	10	&	24	&	16	&	15	&	&	&	81	&	64	&	49	&	56	&	63	&	64	&	72	&	16	&	&	&	105	&	52	&	21	&	30	&	50	&	84	&	63	&	20	\\
40	&	27	&	18	&	18	&	24	&	15	&	30	&	24	&	&	&	81	&	20	&	1	&	6	&	18	&	60	&	27	&	20	&	&	&	105	&	52	&	29	&	22	&	42	&	20	&	55	&	84	\\
45	&	12	&	3	&	3	&	9	&	20	&	15	&	24	&	&	&	81	&	60	&	45	&	42	&	54	&	20	&	63	&	60	&	&	&	111	&	30	&	5	&	9	&	27	&	74	&	37	&	36	\\
45	&	32	&	22	&	24	&	30	&	24	&	36	&	20	&	&	&	81	&	24	&	9	&	6	&	18	&	24	&	27	&	56	&	&	&	111	&	80	&	58	&	56	&	74	&	36	&	84	&	74	\\
45	&	16	&	8	&	4	&	10	&	9	&	18	&	35	&	&	&	81	&	56	&	37	&	42	&	54	&	56	&	63	&	24	&	&	&	111	&	44	&	19	&	16	&	37	&	36	&	48	&	74	\\
45	&	28	&	15	&	21	&	27	&	35	&	35	&	9	&	&	&	81	&	30	&	9	&	12	&	27	&	50	&	36	&	30	&	&	&	111	&	66	&	37	&	42	&	63	&	74	&	74	&	36	\\
49	&	12	&	5	&	2	&	7	&	12	&	14	&	36	&	&	&	81	&	50	&	31	&	30	&	45	&	30	&	54	&	50	&	&	&	112	&	30	&	2	&	10	&	28	&	90	&	40	&	21	\\
49	&	36	&	25	&	30	&	35	&	36	&	42	&	12	&	&	&	81	&	32	&	13	&	12	&	27	&	32	&	36	&	48	&	&	&	112	&	81	&	60	&	54	&	72	&	21	&	84	&	90	\\
49	&	16	&	3	&	6	&	14	&	32	&	21	&	16	&	&	&	81	&	48	&	27	&	30	&	45	&	48	&	54	&	32	&	&	&	112	&	36	&	10	&	12	&	32	&	63	&	42	&	48	\\
49	&	32	&	21	&	20	&	28	&	16	&	35	&	32	&	&	&	81	&	40	&	13	&	26	&	39	&	72	&	54	&	8	&	&	&	112	&	75	&	50	&	50	&	70	&	48	&	80	&	63	\\
49	&	18	&	7	&	6	&	14	&	18	&	21	&	30	&	&	&	81	&	40	&	25	&	14	&	27	&	8	&	42	&	72	&	&	&	115	&	18	&	1	&	3	&	15	&	69	&	23	&	45	\\
49	&	30	&	17	&	20	&	28	&	30	&	35	&	18	&	&	&	81	&	40	&	19	&	20	&	36	&	40	&	45	&	40	&	&	&	115	&	96	&	80	&	80	&	92	&	45	&	100	&	69	\\
49	&	24	&	11	&	12	&	21	&	24	&	28	&	24	&	&	&	82	&	36	&	15	&	16	&	32	&	41	&	41	&	40	&	&	&	117	&	36	&	15	&	9	&	27	&	26	&	39	&	90	\\
50	&	7	&	0	&	1	&	5	&	28	&	10	&	21	&	&	&	82	&	45	&	24	&	25	&	41	&	40	&	50	&	41	&	&	&	117	&	80	&	52	&	60	&	78	&	90	&	90	&	26	\\
50	&	42	&	35	&	36	&	40	&	21	&	45	&	28	&	&	&	85	&	14	&	3	&	2	&	10	&	34	&	17	&	50	&	&	&	119	&	54	&	21	&	27	&	51	&	84	&	63	&	34	\\
50	&	21	&	4	&	12	&	20	&	42	&	30	&	7	&	&	&	85	&	70	&	57	&	60	&	68	&	50	&	75	&	34	&	&	&	119	&	64	&	36	&	32	&	56	&	34	&	68	&	84	\\
50	&	28	&	18	&	12	&	20	&	7	&	30	&	42	&	&	&	85	&	20	&	3	&	5	&	17	&	50	&	25	&	34	&	&	&	120	&	28	&	14	&	4	&	16	&	15	&	30	&	104	\\
50	&	21	&	8	&	9	&	18	&	25	&	25	&	24	&	&	&	85	&	64	&	48	&	48	&	60	&	34	&	68	&	50	&	&	&	120	&	91	&	66	&	78	&	90	&	104	&	104	&	15	\\
50	&	28	&	15	&	16	&	25	&	24	&	32	&	25	&	&	&	85	&	30	&	11	&	10	&	25	&	34	&	34	&	50	&	&	&	120	&	34	&	8	&	10	&	30	&	68	&	40	&	51	\\
55	&	18	&	9	&	4	&	11	&	10	&	20	&	44	&	&	&	85	&	54	&	33	&	36	&	51	&	50	&	60	&	34	&	&	&	120	&	85	&	60	&	60	&	80	&	51	&	90	&	68	\\
55	&	36	&	21	&	28	&	35	&	44	&	44	&	10	&	&	&	88	&	27	&	6	&	9	&	24	&	55	&	33	&	32	&	&	&	120	&	35	&	10	&	10	&	30	&	56	&	40	&	63	\\
56	&	10	&	0	&	2	&	8	&	35	&	14	&	20	&	&	&	88	&	60	&	41	&	40	&	55	&	32	&	64	&	55	&	&	&	120	&	84	&	58	&	60	&	80	&	63	&	90	&	56	\\
56	&	45	&	36	&	36	&	42	&	20	&	48	&	35	&	&	&	91	&	24	&	12	&	4	&	14	&	13	&	26	&	77	&	&	&	120	&	42	&	8	&	18	&	40	&	99	&	54	&	20	\\
56	&	22	&	3	&	12	&	21	&	48	&	32	&	7	&	&	&	91	&	66	&	45	&	55	&	65	&	77	&	77	&	13	&	&	&	120	&	77	&	52	&	44	&	66	&	20	&	80	&	99	\\
56	&	33	&	22	&	15	&	24	&	7	&	35	&	48	&	&	&	95	&	40	&	12	&	20	&	38	&	75	&	50	&	19	&	&	&	120	&	51	&	18	&	24	&	48	&	85	&	60	&	34	\\
57	&	14	&	1	&	4	&	12	&	38	&	19	&	18	&	&	&	95	&	54	&	33	&	27	&	45	&	19	&	57	&	75	&	&	&	120	&	68	&	40	&	36	&	60	&	34	&	72	&	85	\\
57	&	42	&	31	&	30	&	38	&	18	&	45	&	38	&	&	&	96	&	19	&	2	&	4	&	16	&	57	&	24	&	38	&	&	&	120	&	56	&	28	&	24	&	48	&	35	&	60	&	84	\\
57	&	24	&	11	&	9	&	19	&	18	&	27	&	38	&	&	&	96	&	76	&	60	&	60	&	72	&	38	&	80	&	57	&	&	&	120	&	63	&	30	&	36	&	60	&	84	&	72	&	35	\\
57	&	32	&	16	&	20	&	30	&	38	&	38	&	18	&	&	&	96	&	20	&	4	&	4	&	16	&	45	&	24	&	50	&	&	&	121	&	20	&	9	&	2	&	11	&	20	&	22	&	100	\\
63	&	22	&	1	&	11	&	21	&	55	&	33	&	7	&	&	&	96	&	75	&	58	&	60	&	72	&	50	&	80	&	45	&	&	&	121	&	100	&	81	&	90	&	99	&	100	&	110	&	20	\\
63	&	40	&	28	&	20	&	30	&	7	&	42	&	55	&	&	&	96	&	35	&	10	&	14	&	32	&	63	&	42	&	32	&	&	&	121	&	30	&	11	&	6	&	22	&	30	&	33	&	90	\\
63	&	30	&	13	&	15	&	27	&	35	&	35	&	27	&	&	&	96	&	60	&	38	&	36	&	54	&	32	&	64	&	63	&	&	&	121	&	90	&	65	&	72	&	88	&	90	&	99	&	30	\\
63	&	32	&	16	&	16	&	28	&	27	&	36	&	35	&	&	&	96	&	38	&	10	&	18	&	36	&	76	&	48	&	19	&	&	&	121	&	36	&	7	&	12	&	33	&	84	&	44	&	36	\\
64	&	14	&	6	&	2	&	8	&	14	&	16	&	49	&	&	&	96	&	57	&	36	&	30	&	48	&	19	&	60	&	76	&	&	&	121	&	84	&	59	&	56	&	77	&	36	&	88	&	84	
\end{tabular}
\end{centering}

}
%\end{adjustwidth}

%\begin{adjustwidth}{-3cm}{}
\scalebox{.55}{	
											
\begin{centering}
\begin{tabular}{rrrr|rr|rrr||rrrrr|rr|rrr||rrrrr|rr|rr}
$v$	&	$k$	&	$\lambda$	&	$\mu$	&	$r_L$	&	$f$	&	$s_L$	&	$g$	&  &  &     $v$	&	$k$	&	$\lambda$	&	$\mu$	&	$r_L$	&	$f$	&	$s_L$	&	$g$	& & &     $v$	&	$k$	&	$\lambda$	&	$\mu$	&	$r_L$	&	$f$	&	$s_L$	&	$g$\\ \hline
121	&	40	&	15	&	12	&	33	&	40	&	44	&	80	&	&	&	148	&	84	&	50	&	44	&	74	&	36	&	88	&	111	&	&	&	176	&	45	&	18	&	9	&	33	&	32	&	48	&	143	\\
121	&	80	&	51	&	56	&	77	&	80	&	88	&	40	&	&	&	148	&	70	&	36	&	30	&	60	&	37	&	74	&	110	&	&	&	176	&	130	&	93	&	104	&	128	&	143	&	143	&	32	\\
121	&	48	&	17	&	20	&	44	&	72	&	55	&	48	&	&	&	148	&	77	&	36	&	44	&	74	&	110	&	88	&	37	&	&	&	176	&	49	&	12	&	14	&	44	&	98	&	56	&	77	\\
121	&	72	&	43	&	42	&	66	&	48	&	77	&	72	&	&	&	153	&	32	&	16	&	4	&	18	&	17	&	34	&	135	&	&	&	176	&	126	&	90	&	90	&	120	&	77	&	132	&	98	\\
121	&	50	&	21	&	20	&	44	&	50	&	55	&	70	&	&	&	153	&	120	&	91	&	105	&	119	&	135	&	135	&	17	&	&	&	176	&	70	&	18	&	34	&	68	&	154	&	88	&	21	\\
121	&	70	&	39	&	42	&	66	&	70	&	77	&	50	&	&	&	153	&	56	&	19	&	21	&	51	&	84	&	63	&	68	&	&	&	176	&	105	&	68	&	54	&	88	&	21	&	108	&	154	\\
121	&	56	&	15	&	35	&	55	&	112	&	77	&	8	&	&	&	153	&	96	&	60	&	60	&	90	&	68	&	102	&	84	&	&	&	176	&	70	&	24	&	30	&	66	&	120	&	80	&	55	\\
121	&	64	&	42	&	24	&	44	&	8	&	66	&	112	&	&	&	154	&	48	&	12	&	16	&	44	&	98	&	56	&	55	&	&	&	176	&	105	&	64	&	60	&	96	&	55	&	110	&	120	\\
121	&	60	&	29	&	30	&	55	&	60	&	66	&	60	&	&	&	154	&	105	&	72	&	70	&	98	&	55	&	110	&	98	&	&	&	176	&	70	&	42	&	18	&	44	&	10	&	72	&	165	\\
122	&	55	&	24	&	25	&	50	&	61	&	61	&	60	&	&	&	154	&	51	&	8	&	21	&	49	&	132	&	66	&	21	&	&	&	176	&	105	&	52	&	78	&	104	&	165	&	132	&	10	\\
122	&	66	&	35	&	36	&	61	&	60	&	72	&	61	&	&	&	154	&	102	&	71	&	60	&	88	&	21	&	105	&	132	&	&	&	176	&	85	&	48	&	34	&	68	&	22	&	88	&	153	\\
125	&	28	&	3	&	7	&	25	&	84	&	35	&	40	&	&	&	154	&	72	&	26	&	40	&	70	&	132	&	88	&	21	&	&	&	176	&	90	&	38	&	54	&	88	&	153	&	108	&	22	\\
125	&	96	&	74	&	72	&	90	&	40	&	100	&	84	&	&	&	154	&	81	&	48	&	36	&	66	&	21	&	84	&	132	&	&	&	183	&	52	&	11	&	16	&	48	&	122	&	61	&	60	\\
125	&	48	&	28	&	12	&	30	&	10	&	50	&	114	&	&	&	155	&	42	&	17	&	9	&	31	&	30	&	45	&	124	&	&	&	183	&	130	&	93	&	90	&	122	&	60	&	135	&	122	\\
125	&	76	&	39	&	57	&	75	&	114	&	95	&	10	&	&	&	155	&	112	&	78	&	88	&	110	&	124	&	124	&	30	&	&	&	183	&	70	&	29	&	25	&	61	&	60	&	75	&	122	\\
125	&	52	&	15	&	26	&	50	&	104	&	65	&	20	&	&	&	156	&	30	&	4	&	6	&	26	&	90	&	36	&	65	&	&	&	183	&	112	&	66	&	72	&	108	&	122	&	122	&	60	\\
125	&	72	&	45	&	36	&	60	&	20	&	75	&	104	&	&	&	156	&	125	&	100	&	100	&	120	&	65	&	130	&	90	&	&	&	184	&	48	&	2	&	16	&	46	&	160	&	64	&	23	\\
126	&	25	&	8	&	4	&	18	&	35	&	28	&	90	&	&	&	160	&	54	&	18	&	18	&	48	&	75	&	60	&	84	&	&	&	184	&	135	&	102	&	90	&	120	&	23	&	138	&	160	\\
126	&	100	&	78	&	84	&	98	&	90	&	108	&	35	&	&	&	160	&	105	&	68	&	70	&	100	&	84	&	112	&	75	&	&	&	189	&	48	&	12	&	12	&	42	&	90	&	54	&	98	\\
126	&	45	&	12	&	18	&	42	&	90	&	54	&	35	&	&	&	162	&	21	&	0	&	3	&	18	&	105	&	27	&	56	&	&	&	189	&	140	&	103	&	105	&	135	&	98	&	147	&	90	\\
126	&	80	&	52	&	48	&	72	&	35	&	84	&	90	&	&	&	162	&	140	&	121	&	120	&	135	&	56	&	144	&	105	&	&	&	189	&	60	&	27	&	15	&	45	&	28	&	63	&	160	\\
126	&	50	&	13	&	24	&	48	&	105	&	63	&	20	&	&	&	162	&	23	&	4	&	3	&	18	&	69	&	27	&	92	&	&	&	189	&	128	&	82	&	96	&	126	&	160	&	144	&	28	\\
126	&	75	&	48	&	39	&	63	&	20	&	78	&	105	&	&	&	162	&	138	&	117	&	120	&	135	&	92	&	144	&	69	&	&	&	189	&	88	&	37	&	44	&	84	&	132	&	99	&	56	\\
126	&	60	&	33	&	24	&	48	&	21	&	63	&	104	&	&	&	162	&	49	&	16	&	14	&	42	&	63	&	54	&	98	&	&	&	189	&	100	&	55	&	50	&	90	&	56	&	105	&	132	\\
126	&	65	&	28	&	39	&	63	&	104	&	78	&	21	&	&	&	162	&	112	&	76	&	80	&	108	&	98	&	120	&	63	&	&	&	190	&	36	&	18	&	4	&	20	&	19	&	38	&	170	\\
130	&	48	&	20	&	16	&	40	&	39	&	52	&	90	&	&	&	162	&	56	&	10	&	24	&	54	&	140	&	72	&	21	&	&	&	190	&	153	&	120	&	136	&	152	&	170	&	170	&	19	\\
130	&	81	&	48	&	54	&	78	&	90	&	90	&	39	&	&	&	162	&	105	&	72	&	60	&	90	&	21	&	108	&	140	&	&	&	190	&	45	&	12	&	10	&	38	&	75	&	50	&	114	\\
133	&	24	&	5	&	4	&	19	&	56	&	28	&	76	&	&	&	162	&	69	&	36	&	24	&	54	&	23	&	72	&	138	&	&	&	190	&	144	&	108	&	112	&	140	&	114	&	152	&	75	\\
133	&	108	&	87	&	90	&	105	&	76	&	114	&	56	&	&	&	162	&	92	&	46	&	60	&	90	&	138	&	108	&	23	&	&	&	190	&	84	&	33	&	40	&	80	&	133	&	95	&	56	\\
133	&	32	&	6	&	8	&	28	&	76	&	38	&	56	&	&	&	165	&	36	&	3	&	9	&	33	&	120	&	45	&	44	&	&	&	190	&	105	&	60	&	55	&	95	&	56	&	110	&	133	\\
133	&	100	&	75	&	75	&	95	&	56	&	105	&	76	&	&	&	165	&	128	&	100	&	96	&	120	&	44	&	132	&	120	&	&	&	190	&	84	&	38	&	36	&	76	&	75	&	90	&	114	\\
133	&	44	&	15	&	14	&	38	&	56	&	49	&	76	&	&	&	169	&	24	&	11	&	2	&	13	&	24	&	26	&	144	&	&	&	190	&	105	&	56	&	60	&	100	&	114	&	114	&	75	\\
133	&	88	&	57	&	60	&	84	&	76	&	95	&	56	&	&	&	169	&	144	&	121	&	132	&	143	&	144	&	156	&	24	&	&	&	190	&	90	&	45	&	40	&	80	&	57	&	95	&	132	\\
135	&	64	&	28	&	32	&	60	&	84	&	72	&	50	&	&	&	169	&	36	&	13	&	6	&	26	&	36	&	39	&	132	&	&	&	190	&	99	&	48	&	55	&	95	&	132	&	110	&	57	\\
135	&	70	&	37	&	35	&	63	&	50	&	75	&	84	&	&	&	169	&	132	&	101	&	110	&	130	&	132	&	143	&	36	&	&	&	195	&	96	&	46	&	48	&	90	&	104	&	104	&	90	\\
136	&	30	&	8	&	6	&	24	&	51	&	34	&	84	&	&	&	169	&	42	&	5	&	12	&	39	&	126	&	52	&	42	&	&	&	195	&	98	&	49	&	49	&	91	&	90	&	105	&	104	\\
136	&	105	&	80	&	84	&	102	&	84	&	112	&	51	&	&	&	169	&	126	&	95	&	90	&	117	&	42	&	130	&	126	&	&	&	196	&	26	&	12	&	2	&	14	&	26	&	28	&	169	\\
136	&	30	&	15	&	4	&	17	&	16	&	32	&	119	&	&	&	169	&	48	&	17	&	12	&	39	&	48	&	52	&	120	&	&	&	196	&	169	&	144	&	156	&	168	&	169	&	182	&	26	\\
136	&	105	&	78	&	91	&	104	&	119	&	119	&	16	&	&	&	169	&	120	&	83	&	90	&	117	&	120	&	130	&	48	&	&	&	196	&	39	&	2	&	9	&	36	&	147	&	49	&	48	\\
136	&	60	&	24	&	28	&	56	&	85	&	68	&	50	&	&	&	169	&	56	&	15	&	20	&	52	&	112	&	65	&	56	&	&	&	196	&	156	&	125	&	120	&	147	&	48	&	160	&	147	\\
136	&	75	&	42	&	40	&	68	&	50	&	80	&	85	&	&	&	169	&	112	&	75	&	72	&	104	&	56	&	117	&	112	&	&	&	196	&	39	&	14	&	6	&	28	&	39	&	42	&	156	\\
136	&	63	&	30	&	28	&	56	&	51	&	68	&	84	&	&	&	169	&	60	&	23	&	20	&	52	&	60	&	65	&	108	&	&	&	196	&	156	&	122	&	132	&	154	&	156	&	168	&	39	\\
136	&	72	&	36	&	40	&	68	&	84	&	80	&	51	&	&	&	169	&	108	&	67	&	72	&	104	&	108	&	117	&	60	&	&	&	196	&	45	&	4	&	12	&	42	&	150	&	56	&	45	\\
143	&	70	&	33	&	35	&	65	&	77	&	77	&	65	&	&	&	169	&	70	&	27	&	30	&	65	&	98	&	78	&	70	&	&	&	196	&	150	&	116	&	110	&	140	&	45	&	154	&	150	\\
143	&	72	&	36	&	36	&	66	&	65	&	78	&	77	&	&	&	169	&	98	&	57	&	56	&	91	&	70	&	104	&	98	&	&	&	196	&	52	&	18	&	12	&	42	&	52	&	56	&	143	\\
144	&	22	&	10	&	2	&	12	&	22	&	24	&	121	&	&	&	169	&	72	&	31	&	30	&	65	&	72	&	78	&	96	&	&	&	196	&	143	&	102	&	110	&	140	&	143	&	154	&	52	\\
144	&	121	&	100	&	110	&	120	&	121	&	132	&	22	&	&	&	169	&	96	&	53	&	56	&	91	&	96	&	104	&	72	&	&	&	196	&	60	&	14	&	20	&	56	&	135	&	70	&	60	\\
144	&	33	&	12	&	6	&	24	&	33	&	36	&	110	&	&	&	169	&	84	&	41	&	42	&	78	&	84	&	91	&	84	&	&	&	196	&	135	&	94	&	90	&	126	&	60	&	140	&	135	\\
144	&	110	&	82	&	90	&	108	&	110	&	120	&	33	&	&	&	170	&	78	&	35	&	36	&	72	&	85	&	85	&	84	&	&	&	196	&	60	&	23	&	16	&	49	&	48	&	64	&	147	\\
144	&	39	&	6	&	12	&	36	&	104	&	48	&	39	&	&	&	170	&	91	&	48	&	49	&	85	&	84	&	98	&	85	&	&	&	196	&	135	&	90	&	99	&	132	&	147	&	147	&	48	\\
144	&	104	&	76	&	72	&	96	&	39	&	108	&	104	&	&	&	171	&	34	&	17	&	4	&	19	&	18	&	36	&	152	&	&	&	196	&	65	&	24	&	20	&	56	&	65	&	70	&	130	\\
144	&	44	&	16	&	12	&	36	&	44	&	48	&	99	&	&	&	171	&	136	&	105	&	120	&	135	&	152	&	152	&	18	&	&	&	196	&	130	&	84	&	90	&	126	&	130	&	140	&	65	\\
144	&	99	&	66	&	72	&	96	&	99	&	108	&	44	&	&	&	171	&	50	&	13	&	15	&	45	&	95	&	57	&	75	&	&	&	196	&	75	&	26	&	30	&	70	&	120	&	84	&	75	\\
144	&	52	&	16	&	20	&	48	&	91	&	60	&	52	&	&	&	171	&	120	&	84	&	84	&	114	&	75	&	126	&	95	&	&	&	196	&	120	&	74	&	72	&	112	&	75	&	126	&	120	\\
144	&	91	&	58	&	56	&	84	&	52	&	96	&	91	&	&	&	171	&	60	&	15	&	24	&	57	&	132	&	72	&	38	&	&	&	196	&	78	&	32	&	30	&	70	&	78	&	84	&	117	\\
144	&	55	&	22	&	20	&	48	&	55	&	60	&	88	&	&	&	171	&	110	&	73	&	66	&	99	&	38	&	114	&	132	&	&	&	196	&	117	&	68	&	72	&	112	&	117	&	126	&	78	\\
144	&	88	&	52	&	56	&	84	&	88	&	96	&	55	&	&	&	175	&	30	&	5	&	5	&	25	&	84	&	35	&	90	&	&	&	196	&	81	&	42	&	27	&	63	&	24	&	84	&	171	\\
144	&	65	&	16	&	40	&	64	&	135	&	90	&	8	&	&	&	175	&	144	&	118	&	120	&	140	&	90	&	150	&	84	&	&	&	196	&	114	&	59	&	76	&	112	&	171	&	133	&	24	\\
144	&	78	&	52	&	30	&	54	&	8	&	80	&	135	&	&	&	175	&	66	&	29	&	22	&	55	&	42	&	70	&	132	&	&	&	196	&	85	&	18	&	51	&	84	&	187	&	119	&	8	\\
144	&	65	&	28	&	30	&	60	&	78	&	72	&	65	&	&	&	175	&	108	&	63	&	72	&	105	&	132	&	120	&	42	&	&	&	196	&	110	&	75	&	44	&	77	&	8	&	112	&	187	\\
144	&	78	&	42	&	42	&	72	&	65	&	84	&	78	&	&	&	175	&	72	&	20	&	36	&	70	&	153	&	90	&	21	&	&	&	196	&	90	&	40	&	42	&	84	&	105	&	98	&	90	\\
144	&	66	&	30	&	30	&	60	&	66	&	72	&	77	&	&	&	175	&	102	&	65	&	51	&	85	&	21	&	105	&	153	&	&	&	196	&	105	&	56	&	56	&	98	&	90	&	112	&	105	\\
144	&	77	&	40	&	42	&	72	&	77	&	84	&	66	&	&	&	176	&	25	&	0	&	4	&	22	&	120	&	32	&	55	&	&	&	196	&	91	&	42	&	42	&	84	&	91	&	98	&	104	\\
147	&	66	&	25	&	33	&	63	&	110	&	77	&	36	&	&	&	176	&	150	&	128	&	126	&	144	&	55	&	154	&	120	&	&	&	196	&	104	&	54	&	56	&	98	&	104	&	112	&	91	\\
147	&	80	&	46	&	40	&	70	&	36	&	84	&	110	&	&	&	176	&	40	&	12	&	8	&	32	&	55	&	44	&	120	&	&	&		&		&		&		&		&		&		&		\\
148	&	63	&	22	&	30	&	60	&	111	&	74	&	36	&	&	&	176	&	135	&	102	&	108	&	132	&	120	&	144	&	55	&	&	&		&		&		&		&		&		&		&			
\end{tabular}
\end{centering}

}
%\end{adjustwidth}

%%%%%%%%%%%%

\bibliographystyle{amsplain}

\bibliography{srgbib}

\end{document}